\newtheoremstyle{mystyle}
{3pt}
{3pt}
{\itshape}
{}
{\bold}
{.}
{.5em}
{}
\newtheorem{definition}{Definition}[section]
\newtheorem{theorem}[definition]{Theorem}
\newtheorem{remark}[definition]{Remark}
\numberwithin{equation}{section}
\newcommand{\Gi}{\mathrm{\Gamma_{int}}}
\newcommand{\Gb}{\mathrm{\Gamma_{bottom}}}
\newcommand{\Gl}{\mathrm{\Gamma_{left}}}
\newcommand{\Gr}{\mathrm{\Gamma_{right}}}
\newcommand{\Gs}{\mathrm{\Gamma_{sides}}}
\newcommand{\Gt}{\mathrm{\Gamma_{top}}}
\newcommand{\Gfront}{\mathrm{\Gamma_{front}}}
\newcommand{\Gback}{\mathrm{\Gamma_{back}}}
\newcommand{\Go}{\mathrm{\Gamma_{out}}}
\def\nd#1{\frac{\partial #1}{\partial n}} 
\def\td#1{\frac{\partial #1}{\partial t}} 
\def\intdxdt#1{\int_{0}^{T} \int_{\Omega} #1 \, dx\, dt}
\def\intdx#1{\int_{\Omega} #1 \, dx}
\def\intdsidt#1{\int_{0}^{T}\int_{\Gi} #1 \, ds\, dt}
\def\intdsodt#1{\int_{0}^{T} \int_{\Go} #1 \, ds\, dt}
\def\intdt#1{\int_{0}^{T} #1 \, dt}
\newcommand{\LL}{{\mathscr{L}}}
\newcommand{\R}{{\mathbb{R}}} 
\def\moverlay{\mathpalette\mov@rlay}
\def\mov@rlay#1#2{\leavevmode\vtop{%
   \baselineskip\z@skip \lineskiplimit-\maxdimen
   \ialign{\hfil$\m@th#1##$\hfil\cr#2\crcr}}}
\newcommand{\charfusion}[3][\mathord]{
    #1{\ifx#1\mathop\vphantom{#2}\fi
        \mathpalette\mov@rlay{#2\cr#3}
      }
    \ifx#1\mathop\expandafter\displaylimits\fi}
\newcommand{\cupdot}{\charfusion[\mathbin]{\cup}{\cdot}}
\newcommand{\bigcupdot}{\charfusion[\mathop]{\bigcup}{\cdot}}
\begin{document}
\title{\bf{Algorithmic aspects of multigrid methods for optimization in shape spaces}}
\author{Martin Siebenborn\thanks{Universit\"at Trier, D-54286 Trier, Germany, Email: siebenborn@uni-trier.de, welker@uni-trier.de} \and Kathrin Welker\footnotemark[1]
}
\date{}
\maketitle

\begin{abstract}
\noindent
We examine the interaction of multigrid methods and shape optimization in appropriate shape spaces.
Our aim is a scalable algorithm for application on supercomputers, which can only be achieved by mesh-independent convergence.
The impact of discrete approximations of geometrical quantities, like the mean curvature, on a multigrid shape optimization algorithm with quasi-Newton updates is investigated.
For the purpose of illustration, we consider a complex model for the identification of cellular structures in biology with minimal compliance in terms of elasticity and diffusion equations.
\end{abstract}

\section{Introduction}
PDE constrained shape optimization is becoming more and more suited for practical applications, e.g., acoustics \cite{Berggren2016largescale}, aerodynamics \cite{Mohammadi-2001} and electrostatics \cite{Langer-2015}. 
A finite dimensional optimization problem can be obtained for example by representing shapes as splines.
However, the connection of shape calculus with infinite dimensional spaces \cite{Delfour-Zolesio-2001,ItoKunisch,SokoZol} leads to a more flexible approach. 
In recent work, it has been shown that PDE constrained shape optimization problems can be embedded in the framework of optimization on shape spaces.
Finding a shape space and an associated metric is a challenging task and different approaches lead to various models. There exists no common shape space suitable for all applications. 
One possible approach is to define shapes as elements of a Riemannian manifold as proposed in \cite{MichorMumford2,MichorMumford1}.
In \cite{MichorMumford}, a survey of various suitable inner products is given, e.g., the curvature weighted Riemannian metric and the Sobolev metric.
From a theoretical point of view this is attractive because algorithmic ideas from \cite{Absil} can be combined with approaches from differential geometry. 
For example in \cite{schulz2014structure}, shape optimization is considered as optimization on a Riemannian shape manifold.
This particular manifold contains only shapes with infinitely differentiable boundaries, which limits the practical applicability.
From a computational point of view, usually finite element methods are used to discretize the PDE models where one has to deal with polygonal shape representations. 

A well-established approach is to deal with shape derivatives in a so-called Hadamard form, i.e., in the form of integrals over the surface (cf.~\cite{Mohammadi-2001,SokoZol}).
An equivalent and intermediate result in the process of deriving Hadamard expressions is a volume expression of the shape derivative.
One usually has to require additional regularity assumptions in order to transform volume into surface forms.
In addition to saving analytical effort this makes volume expressions preferable over Hadamard forms, which is also utilized in \cite{Langer-2015}.
In the case of the more attractive volume formulation, the shape manifold and the corresponding inner products mentioned above are not appropriate.
One possible approach to use these formulations is given in \cite{schulz2015Steklov}.
Here an inner product, which is called Steklov-Poincar\'{e} metric, and a shape space are proposed.
These are further considered in the following.
The combination of this particular shape space and its associated inner product is an essential step towards applying efficient finite element solvers.
This is especially important in order to avoid complicated load balancing for parallel computers with respect to both, the volume and surface elements. 
If there are many computationally costly operations on surfaces like the evaluation of the Sobolev metric, this is necessary.
By using volume forms and Steklov-Poincar\'{e}-type metrics one only has to consider standard finite element assemblies and a classical load balancing with respect to volume elements.

In general, practical applications necessitate very fine discretizations.
Here one can observe mesh-dependence at several points, e.g., the number of iterations of the PDE solver and the overall optimization loop.
The same holds for discrete approximations of geometrical quantities, like the mean curvature (cf.~\cite{meyer2003discrete}), which strongly depend on the chosen mesh.
Especially a perimeter regularization is affected by the increasing values of the discretized mean curvature within a multgrid framework.
In particular, scalable algorithms for application on supercomputers can only be achieved by mesh-independent convergence of both, the PDE simulation and the overall optimization loop.
In order to come up with such an algorithm, one has to apply on the one hand multigrid preconditioner to the PDE solver and on the other hand quasi-Newton methods or something even more sophisticated for the optimization.
In this paper, we examine the interaction of multigrid and shape optimization based on Steklov-Poincar\'{e} metrics and the corresponding shape space.
We focus on this particular combination because it is well-suited for a large-scale finite element algorithm with quasi-Newton techniques.

This paper has the following structure. 
In section \ref{section_model}, besides a short review on the background of shape optimization, PDE models are formulated.
Furthermore, the corresponding shape derivatives are given.
Section \ref{section_algorithm} summarizes the entire process from shape derivatives to a complete optimization algorithm in suitable shape spaces.
A numerical test framework and results are presented in section \ref{section_results}.

\section{Model formulations and their shape derivatives}\label{section_model}
After setting up notation and terminology we formulate the model problems. They are motivated by finding the optimal design of cellular structures. In the third part of this section we deduce the shape derivatives of the model problems.

\subsection{Notations and definitions}

One focus in shape optimization is to investigate shape functionals. First, we define such a functional.

\begin{definition}[Shape functional]
Let $D$ denote a non-empty subset of $\R^d$, where $d\in\mathbb{N}$. Moreover, $\mathcal{A}\subset \{\Omega\colon \Omega \subset D\}$ denotes a set of subsets. A function
$$J\colon \mathcal{A}\to \R\text{, } \Omega\mapsto J(\Omega)$$
is called a shape functional.
\end{definition}

\noindent
In the following, let $D$ be as in the definition above. 
Moreover, let $\{F_t\}_{t\in[0,T]}$ be a family of mappings $F_t\colon \overline{D}\to\mathbb{R}^d$ such that $F_0=id$, where $T>0$.
This family transforms a domain $\Omega\subset D$ into \emph{perturbed domains} 
\begin{equation*}
\Omega_t := F_t(\Omega)=\{F_t(x)\colon x\in \Omega\}\text{ with }\Omega_0=\Omega
\end{equation*}
and the boundary $\Gamma=\partial\Omega$ into \emph{perturbed boundaries}
\begin{equation*}
\Gamma_t:= F_t(\Gamma)=\{F_t(x)\colon x\in \Gamma\}\text{ with }\Gamma_0=\Gamma.
\end{equation*}
Considering the domain $\Omega$ as a collection of material particles, which are changing their position in the time-interval $[0,T]$, the family $\{F_t\}_{t\in[0,T]}$ describes the motion of each particle. This means that at time $t\in [0,T]$ a particle $x\in\Omega$ has the new position $x_t:= F_t(x)\in\Omega_t$ with $x_0=x$.
The motion of each such particle $x$ can be described by the \emph{velocity method} or by the \emph{perturbation of identity}.
Let $V$ be a sufficiently smooth vector field.
For $V$ the velocity method defines the family of the above-mentioned mappings as the flow $F_t(x):= \xi(t,x)$, which is determined by the following initial value problem:
\begin{equation*}
\begin{split}
\frac{d\xi(t,x)}{dt}&=V(\xi(t,x))\\
\xi(0,x)&=x
\end{split}
\end{equation*}
The perturbation of identity is defined by $F_t(x):= x+tV(x)$ and used in the following.

This paper deals with PDE constrained shape optimization problems, i.e., shape optimization problems constrained by equations involving an unknown function of two or more variables and at least one partial derivative of this function. Such a problem is given by
\begin{equation*}
\min_{\Omega} J(\Omega),
\end{equation*}
where $J$ is a shape functional additionally depends on a solution of a PDE.
To solve these problems, we need their shape derivatives: 

\begin{definition}[Shape derivative]
\label{def_shapeder}
Let $D\subset \R^d$ be open, where $d\geq 2$ is a natural number. Moreover, let $k\in\mathbb{N}\cup \{\infty\}$ and let $\Omega\subset D$ be measurable.
The Eulerian derivative of a shape functional $J$ at $\Omega$ in direction $V\in\mathcal{C}^k_0(D,\R^d)$ is defined by
\begin{equation}
\label{eulerian}
DJ(\Omega)[V]:= \lim\limits_{t\to 0^+}\frac{J(\Omega_t)-J(\Omega)}{t}. 
\end{equation}
If for all directions $V\in\mathcal{C}^k_0(D,\R^d)$ the Eulerian derivative (\ref{eulerian}) exists and the mapping 
\begin{equation*}
G(\Omega)\colon \mathcal{C}^k_0(D,\R^d)\to \R, \ V\mapsto DJ(\Omega)[V]
\end{equation*}
is linear and continuous, the expression $DJ(\Omega)[V]$ is called the shape derivative of $J$ at $\Omega$ in direction $V\in\mathcal{C}^k_0(D,\R^d)$. In this case, $J$ is called shape differentiable of class $\mathcal{C}^k$ at $\Omega$.
\end{definition}

For a detailed introduction into shape calculus, we refer to the monographs \cite{Delfour-Zolesio-2001,SokoZol}.
In particular, \cite{SokoZol} states that shape derivatives can always be expressed as boundary integrals due to the Hadamard structure theorem \cite[theorem 2.27]{SokoZol}.
In many cases, the shape derivative arises in two equivalent notational forms:
\begin{align}
\label{dom_form}
DJ_\Omega[V]&:=\int_\Omega F(x)V(x)\, dx  &\text{(volume formulation)}\\
\label{bound_form}
DJ_\Gamma[V]&:=\int_\Gamma f(s)V(s)^T n(s)\, ds  &\text{(surface formulation)}
\end{align}
Here $F$ is a (differential) operator acting linearly on the vector field $V$ and $f\in L^1(\Gamma)$ with $DJ_\Omega[V]=DJ(\Omega)[V]=DJ_\Gamma[V]$.

In general, we have to deal with so-called material derivatives in order to derive shape derivatives. The definition of these shape derivatives is given in the following. For a material derivative free approach we refer to \cite{Sturm2013}.

\begin{definition}[Material derivative]
\label{material_deriv}
Let $\Omega,\Omega_t,F_t$ and $T$ be as above. Moreover, let $\{p_t\colon \Omega_t\to \R\colon t\leq T \}$ denote a family of mappings. The material derivative of a generic function $p\hspace{.3mm}(=p_0)\colon \Omega\to \mathbb{R}$ at $x\in\Omega$ is denoted by $D_mp$ or $\dot{p}$ and given by the derivative of the composed function $p_t\circ F_t\colon \Omega\to\Omega_t\to\mathbb{R}$ defined in the fixed domain $\Omega$, i.e.,
\begin{equation*}
\dot{p}(x):=\lim\limits_{t\to 0^+}\frac{\left(p_t\circ F_t\right)(x)-p(x)}{t} =\frac{d^+}{dt}\left(p_t\circ F_t\right)(x)\,\rule[-2.5mm]{.1mm}{6mm}_{\hspace{1mm}t=0}.
\end{equation*}
\end{definition}

\noindent
In the following, let $p\colon \Omega\to\mathbb{R}$ be a function. 
The classical chain rule for differentiation applied to $\dot{p}$ gives the relation between material and shape derivatives. The shape derivative of $p$ in the direction of a vector field $V$ is denoted by $p'$ and given by
\begin{equation}
\label{shape_der}
p'=\dot{p}-V^T\nabla p \quad \text{in }\Omega.
\end{equation}
In subsection \ref{subsection_shapederivatives}, the following rules for the material derivative are needed to derive shape derivatives of objective functions depending on solutions of PDEs. For the material derivative the product rule holds, i.e.,
\begin{equation}
\label{product_rule}
D_m(p\hspace{.7mm}q)=D_mp\hspace{.7mm}q+p\hspace{.7mm}D_mq.
\end{equation}
While the shape derivative commutes with the gradient, the material derivative does not, but the following equality, which is proven in \cite{Berggren}, holds:
\begin{equation}
\label{material_grad}
D_m\nabla p=\nabla D_mp-\nabla V^T\nabla p.
\end{equation}

The concept of material and shape derivatives of a function $p\colon \Omega\to \R$ can be extended to its boundary $\Gamma=\partial\Omega$. We mention only a few aspects. For more details we refer to the literature, e.g., \cite{SokoNov}. Let $z\colon \Gamma\to \R$ be the trace on the boundary $\Gamma$ of $p$. In this setting, the boundary shape derivative $z'$ is defined by
\begin{equation}
\label{shape_der_boundary}
z'=\dot{p}-V^T\nabla_\Gamma p,
\end{equation}
where $\nabla_\Gamma$ denotes the tangential gradient given by
\begin{equation*}
\nabla_\Gamma p=\nabla p-\frac{\partial p}{\partial n}n.
\end{equation*}
Here $\frac{\partial}{\partial n}$ denotes the derivative normal to $\Gamma$.
Combining (\ref{shape_der}) with (\ref{shape_der_boundary}) gives the correlation of boundary and domain shape derivatives:
\begin{equation*}
z'=p'+V^T\frac{\partial p}{\partial n}n
\end{equation*}

In order to deduce shape derivative formulas we have to consider the derivative of perturbed objective functions:
\begin{align*}
\frac{d^+}{dt}\left(\int_{\Omega_t}p_t\ dx_t\right)\,\rule[-4mm]{.1mm}{9mm}_{\hspace{1mm}t=0} \qquad \text{and}\qquad \frac{d^+}{dt}\left(\int_{\Gamma_t}z_t\ ds_t\right)\,\rule[-4mm]{.1mm}{9mm}_{\hspace{1mm}t=0},
\end{align*}
where $\Omega,\Omega_t,p,p_t,\Gamma_t$ are as above and $z_t\colon\Gamma_t\to \R$ denotes a mapping. They are given by
\begin{align}
\label{der_domain_int}
\frac{d^+}{dt}\left(\int_{\Omega_t}p_t\ dx_t\right)\,\rule[-4mm]{.1mm}{9mm}_{\hspace{1mm}t=0} & =\int_{\Omega}\dot{p}+\mathrm{div}(V)p\ dx \text{,}\\
\label{der_boundary_int}
\frac{d^+}{dt}\left(\int_{\Gamma_t}z_t\ ds_t\right)\,\rule[-4mm]{.1mm}{9mm}_{\hspace{1mm}t=0} & =\int_{\Gamma}\dot{z}+\mathrm{div}_\Gamma(V)z\ ds,
\end{align}
where
\begin{equation*}
\mathrm{div}_\Gamma(V) := \text{div} (V) - n^T \frac{\partial V}{\partial n}
\end{equation*}
denotes the tangential divergence of $V$.
For their proofs we refer to the literature, e.g., \cite{HaslingerMakinen,ItoKunisch,Welker}.


\subsection{Problem formulations}
Let $\Omega\subset D$ be a bounded Lipschitz domain with boundary $\partial\Omega$. This domain is assumed to be a cuboid and partitioned in a subdomain $\Omega_\text{out}\subset\Omega$ and a finite number of disjoint subdomains $\Omega_i\subset \Omega$ with boundaries $\Gamma_i:=\partial\Omega_i$ such that $\Omega_\text{out}\bigcupdot\left(\cupdot_{i\in\mathbb{N}} \Omega_i\right) \bigcupdot\left(\cupdot_{i\in\mathbb{N}}\Gamma_i\right)=\Omega$ and $\Gb\cupdot\Gs\cupdot\Gt=\partial\Omega\ (=:\Go)$, where $\cupdot$ denotes the disjoint union.
The union of all domains $\Omega_i$ is denoted by $\Omega_\text{int} := \bigcupdot_{i\in\mathbb{N}}\Omega_i$ and the union of all boundaries $\Gamma_i$ is called the interface and denoted by $\Gi:=\bigcupdot_{i\in\mathbb{N}}\Gamma_i$. The outer normal vector field to $\Omega_\text{int}$ is given by $n$.  Figure \ref{fig_domain} illustrates this situation.
In our setting, $\Omega$ is meant to be composed of two distinct materials, one in $\Omega_\text{int}$ and one in $\Omega_\text{out}$.

Let $\nu_l\in\R$ be arbitrary constants, where $l\in\{1,\cdots,4\}$. For the objective function
\begin{equation}
J(y,u,\Omega) = j_1(u,\Omega)+j_2(y,\Omega)+j_3(\Omega)+j_4(\Omega)\label{objective}
\end{equation}
with
\begin{align*}
j_1(u,\Omega)&:=\nu_1 \int_{\Omega} \sigma(u):\epsilon(u)\; dx,\\
j_2(y,\Omega) &:=\frac{\nu _2}{2} \int_0^T \int_{\Omega} (y - \bar{y})^2\; dx\, dt,\\
j_3(\Omega)& := \nu_3 \int_{\Omega_\text{out}} 1\; dx ,\\
j_4(\Omega) & := \nu_4 \int_{\Gi} 1\; ds
\end{align*}
we consider the following PDE constrained optimization problem in strong form:
\begin{align}
\min\limits_{\Gi}\; J(y,u,\Omega)\label{eq_minimization}
\end{align}
subject to the following constraints:
\begin{align}
- \text{div}\,\sigma(u) & = 0 \quad \text{in}\; \Omega \label{le1} \\
u & = 0 \quad \text{on}\; \Gb \label{le2} \\
\sigma(u) \cdot n & = f \quad \text{on}\; \Gt \cup \Gs \label{le4}\\[10pt]
\frac{\partial y}{\partial t} - \text{div} (k \nabla y) & = 0 \quad \text{in} \; \Omega \times (0,T] \label{diff1}\\
y & = 1 \quad \text{on}\; \Gt \times (0,T] \label{diff2} \\
\frac{\partial y}{\partial n} & = 0  \quad \text{on}\; \left(\Gb\cup\Gs\right) \times (0,T] \label{diff3} \\
y & = 0  \quad \text{in} \; \Omega \times \lbrace 0 \rbrace \label{diff4}
\end{align}
Equations \eqref{le1}-\eqref{le4} describe the linear elasticity model, where $\sigma := \lambda \text{Tr}(\epsilon) I + 2\mu \epsilon$ denotes the stress tensor and $\epsilon = \frac{1}{2}\left( \nabla u + \nabla u^T \right)$ the strain tensor with respect to the Lam\'{e} parameters $\lambda$ and $\mu$.
We further assume
\begin{equation}
f = \begin{cases}
0 &\quad \text{on}\; \Gs\\
f_\text{top} &\quad \text{on}\; \Gt\\
\end{cases},
\end{equation}
where $f_\text{top}$ is a force at the top boundary of the domain, which leads to the deformation $u$.
A diffusion model is given by  \eqref{diff1}-\eqref{diff4} with a jumping permeability coefficient $k$.
The different properties of the two materials with respect to elasticity and permeability are modelled by the following choice of coefficients:
\begin{equation*}
k :=
\begin{cases}
k_\text{out} &\text{in} \; \Omega_\text{out}\\
k_\text{int} &\text{in} \; \Omega_\text{int}\\
\end{cases},\quad
\lambda :=
\begin{cases}
\lambda_\text{out} & \text{in} \; \Omega_\text{out}\\
\lambda_\text{int} & \text{in} \; \Omega_\text{int}\\
\end{cases},\quad
\mu :=
\begin{cases}
\mu_\text{out} & \text{in} \; \Omega_\text{out}\\
\mu_\text{int} & \text{in} \; \Omega_\text{int}\\
\end{cases}.
\end{equation*}
Due to the jump in these coefficients, the formulations (\ref{le1}) and (\ref{diff1}) are to be understood only formally.
The objective function $J$ is composed of the four shape functionals $j_l$. Here $j_1$ corresponds to the minimization of the compliance of the composite material in the domain $\Omega$. With $j_2$ the diffusion model is fitted to data measurements $\bar{y}$. We assume for the observation $\bar{y}\in L^2\left(0,T;L^2 (\Omega)\right)$.
Since we are interested in a space-filling design of the inclusions $\Omega_\text{int}$, the purpose of the functional $j_3$ is to minimize the volume of $\Omega_\text{out}$.
This is equivalent to maximizing $\Omega_\text{int}$ if the outer shape $\Go$ is fixed.
$j_4$ is a perimeter regularization. 
Of course, the integral in $j_4$ has to be understood as the sum of the integrals over $\Gi$. 
We formulate explicitly the continuity of the state and of the flux at the interface as
\begin{align}
\label{interface_cond1}
\left\llbracket u\right\rrbracket & =0, & \hspace{-2cm} \left\llbracket \sigma(u)\cdot n\right\rrbracket =0& \hspace{1cm} \text{on }\Gi,\\[5pt]
\label{interface_cond2}
\left\llbracket y\right\rrbracket & =0, & \hspace{-2cm} \left\llbracket k\nd{y}\right\rrbracket =0& \hspace{1cm}\text{on }\Gi\times(0,T],
\end{align}
where the jump symbol $\llbracket\cdot\rrbracket$ denotes the discontinuity across the interface $\Gi$ and is defined by $\llbracket v \rrbracket := v\,\rule[-2mm]{.1mm}{4mm}_{\hspace{.5mm}\Omega_\text{int}}-v\,\rule[-2mm]{.1mm}{4mm}_{\hspace{.5mm}\Omega_\text{out}}$ for $v\in\Omega$.

\begin{figure}
\begin{center}
\def\svgwidth{0.7\textwidth}
\begingroup%
  \makeatletter%
  \providecommand\color[2][]{%
    \errmessage{(Inkscape) Color is used for the text in Inkscape, but the package 'color.sty' is not loaded}%
    \renewcommand\color[2][]{}%
  }%
  \providecommand\transparent[1]{%
    \errmessage{(Inkscape) Transparency is used (non-zero) for the text in Inkscape, but the package 'transparent.sty' is not loaded}%
    \renewcommand\transparent[1]{}%
  }%
  \providecommand\rotatebox[2]{#2}%
  \ifx\svgwidth\undefined%
    \setlength{\unitlength}{324.88537598bp}%
    \ifx\svgscale\undefined%
      \relax%
    \else%
      \setlength{\unitlength}{\unitlength * \real{\svgscale}}%
    \fi%
  \else%
    \setlength{\unitlength}{\svgwidth}%
  \fi%
  \global\let\svgwidth\undefined%
  \global\let\svgscale\undefined%
  \makeatother%
  \begin{picture}(1,0.80890068)%
    \put(0,0){\includegraphics[width=\unitlength,page=1]{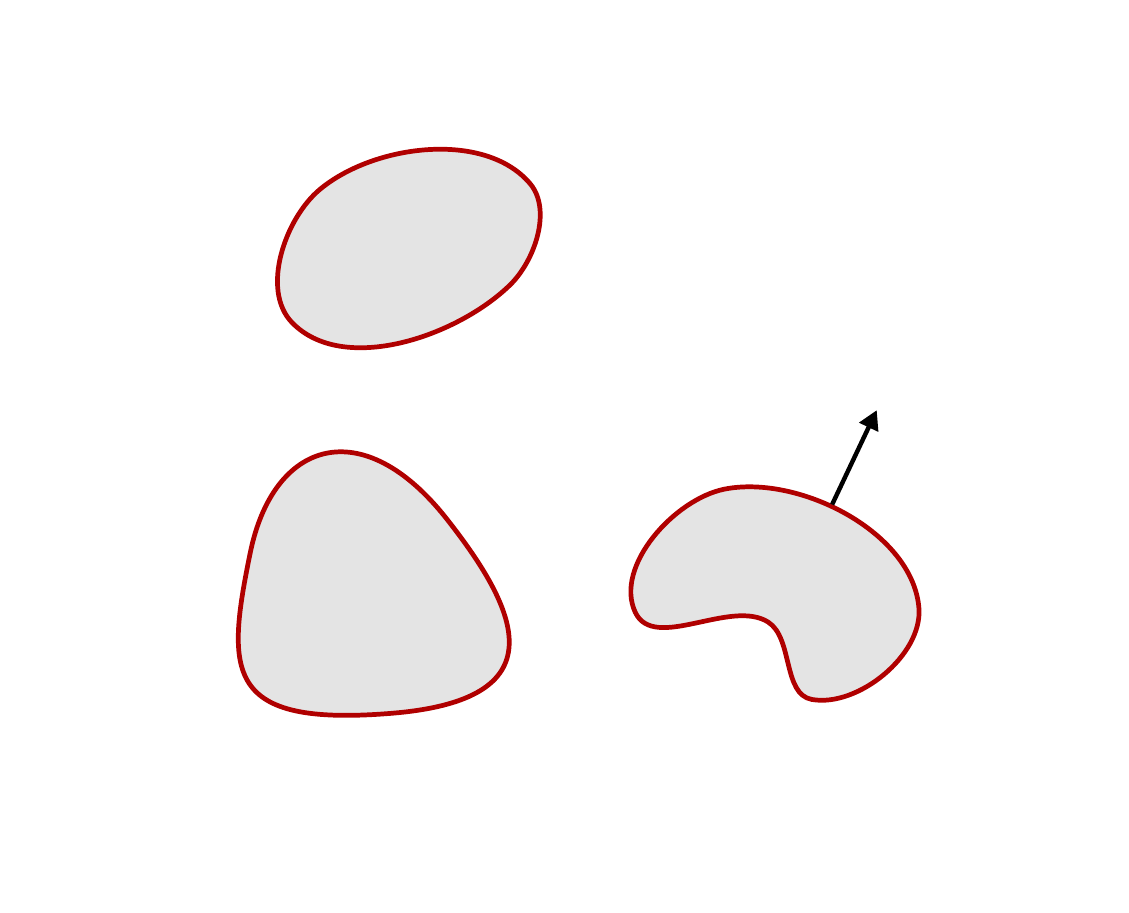}}%
    \put(0.33092989,0.58533828){\color[rgb]{0,0,0}\makebox(0,0)[lb]{\smash{$\Omega_1$}}}%
    \put(0.29751155,0.27226084){\color[rgb]{0,0,0}\makebox(0,0)[lb]{\smash{$\Omega_2$}}}%
    \put(0.63099177,0.5522717){\color[rgb]{0,0,0}\makebox(0,0)[lb]{\smash{$\Omega_\text{out}$}}}%
    \put(0.65913354,0.29371898){\color[rgb]{0,0,0}\makebox(0,0)[lb]{\smash{$\Omega_3$}}}%
    \put(0.78401269,0.41683932){\color[rgb]{0,0,0}\makebox(0,0)[lb]{\smash{$n$}}}%
    \put(0.42636477,0.00328166){\color[rgb]{0,0,0}\makebox(0,0)[lb]{\smash{$\Gamma_\text{bottom}$}}}%
    \put(0.45119061,0.7800747){\color[rgb]{0,0,0}\makebox(0,0)[lb]{\smash{$\Gamma_\text{top}$}}}%
    \put(0,0){\includegraphics[width=\unitlength,page=2]{domain.pdf}}%
    \put(0.37852368,0.37492926){\color[rgb]{0.69019608,0,0}\makebox(0,0)[lb]{\smash{$\Gamma_\text{int}$}}}%
  \end{picture}%
\endgroup%
\end{center}
\caption{Illustration of the domain $\Omega$}
\label{fig_domain}
\end{figure}

\begin{remark}
Equations (\ref{le1})-(\ref{le4}) admit a solution $u\in H^1(\Omega, \mathbb{R}^d)$ and (\ref{diff1})-(\ref{diff4}) have a solution $y\in L^2\left(0,T;H^1(\Omega)\right)$.
However, it turns out that the restrictions of these solutions to $\Omega_\text{int}$ and $\Omega_\text{out}$ have a higher regularity (cf.~for example \cite[proposition 11.7]{ItoKunisch}).
To be more precise, $u\,\rule[-2mm]{.1mm}{4mm}_{\hspace{.5mm}\Omega_\text{int}}\in H^2(\Omega_\text{int})$, $u\,\rule[-2mm]{.1mm}{4mm}_{\hspace{.5mm}\Omega_\text{out}}\in H^2(\Omega_\text{out}, \mathbb{R}^d)$, $y\,\rule[-2mm]{.1mm}{4mm}_{\hspace{.5mm}\Omega_\text{int}}\in L^2\left(0,T;H^2(\Omega_\text{int})\right)$ and $y\,\rule[-2mm]{.1mm}{4mm}_{\hspace{.5mm}\Omega_\text{out}}\in L^2\left(0,T;H^2(\Omega_\text{out})\right)$.
In this setting, the integral over $\Omega$ has to be understood as the sum of the integrals over $\Omega_\text{int}$ and $\Omega_\text{out}$.
In the following, the integral over $\Omega$ always denotes this sum.
Thus, it is guaranteed that $\frac{\partial u\,\rule[-.7mm]{.1mm}{2mm}_{\hspace{.5mm}\Omega_\text{int}}}{\partial n}\in H^{1/2}\left(\Gi, \mathbb{R}^d\right)$ and $\frac{\partial y\,\rule[-.7mm]{.1mm}{2mm}_{\hspace{.5mm}\Omega_\text{int}}}{\partial n}\in L^2\left(0,T;H^{1/2}(\Gi)\right)$ by the trace theorem for Sobolev spaces.
\end{remark}

The boundary value problem (\ref{le1})-(\ref{le4}) is given in its weak form by
\begin{equation}
a_1(u,w)=b_1(u,w_1, w_2)\, , \ \forall w\in H^1\left(\Omega, \mathbb{R}^d\right)
\end{equation}
and for all $w_1\in H^{-1/2}\left(\Gb, \mathbb{R}^d\right)$,
$w_2\in H^{1/2}\left(\Gs \cup\Gt, \mathbb{R}^d\right)$. Here the bilinear form $a_1(u,w)$ is given by
\begin{equation}
\begin{split}
a_1(u,w) & :=\int_{\Omega}\sigma(u):\epsilon(w)\hspace{.7mm}dx -\int_{\Gi}\left\llbracket \left(\sigma(u)\cdot n\right)^Tw\right\rrbracket dx \\
& \hspace*{.5cm} -\int_{\Go}\left(\sigma(u)\cdot n\right)^Tw \hspace{.7mm}ds
\end{split}
\end{equation}
and $b_1(u,w_1,w_2)$ is defined by
\begin{equation}
b_1(u,w_1,w_2) :=\int_{\Gb}w_1^T u\hspace{.7mm}ds +\int_{\Go\setminus\Gb}w_2^T \left(\sigma (u)\cdot n-f\right)\hspace{.3mm}ds.
\end{equation}
Moreover, problem (\ref{diff1})-(\ref{diff4}) is written in weak form as
\begin{equation}
\label{wf}
a_2(y,z)=b_2(y,z_1, z_2)\, , \ \forall z\in
W\left(0,T;H^1(\Omega)\right)
\end{equation}
and for all $z_1\in L^2\left(0,T;H^{-1/2}(\Gt)\right)$,
$z_2\in L^2\left(0,T;H^{1/2}(\Gb\cup\Gs)\right)$
as in \cite{schulz2014structure}. Here the bilinear form $a_2(y,z)$ is given by
\begin{equation}
\begin{split}
a_2(y,z) & :=\intdx{y(T,x)\, z(T,x)}-\intdxdt{\td{z}y+k\nabla y^T\nabla z}\\
& \hspace*{.5cm}-\intdsidt{\left\llbracket k\nd{y}z\right\rrbracket}-\intdsodt{k_\text{out}\nd{y}z}\label{wfa2}
\end{split}
\end{equation}
and $b_2(y,z_1,z_2)$ by
\begin{equation}
b_2(y,z_1,z_2) :=\intdt{\int_{\Gt}z_1(y-1)\, ds}+\intdt{\int_{\Go\setminus\Gt}z_2\nd{y}\, ds}.
\label{wfb}
\end{equation}
For properties of the function spaces, we refer to the literature, e.g., \cite{GrossReusken,Troeltzsch}.

The Lagrangian of (\ref{objective})-(\ref{diff4}) is defined by
\begin{equation}
\LL(y,u,w,z,\Omega)=\LL_1(u,w,\Omega)+\LL_2(y,z,\Omega)+\LL_3(\Omega)+\LL_4(\Omega)
\label{lagrangian}
\end{equation}
with 
\begin{align*}
\LL_1(u,w,\Omega)&:=j_1(u,\Omega) + a_1(u,w)-b_1(u,w_1,w_2) ,\\
\LL_2(y,z,\Omega) &:=j_2(y,\Omega) + a_2(y,z)-b_2(y,z_1,z_2),\\
\LL_3(\Omega)& := j_3(\Omega) ,\\
\LL_4(\Omega) & := j_4(\Omega).
\end{align*}

The adjoint problem to (\ref{le1})-(\ref{le4}), which we obtain from differentiating the Lagrangian $\LL_1$ with respect to $u$, is given in strong form by 
\begin{align}
-\text{div}\,\sigma(w) & = 0 \quad \text{in}\; \Omega \label{adjointle1} \\
w & = 0 \quad \text{on}\; \Gb \\
\sigma(w) \cdot n & = -\nu_1 f \quad \text{on}\; \Gs\cup\Gt \\
w_1&= \sigma(w) \cdot n \quad \text{on}\; \Gb \\
w_2 & = - w \quad \text{on}\; \Gs\cup\Gt \label{adjointle3}
\end{align}
and the state equation to (\ref{le1})-(\ref{le4}), which we get by differentiating the Lagrangian $\LL_1$ with respect to $w$, is given in strong form by
\begin{equation}
-\text{div}\,\sigma(u) = 0 \quad \text{in}\; \Omega. \label{designle}
\end{equation}
Moreover, the adjoint problem to (\ref{diff1})-(\ref{diff4}), which we obtain from differentiating the Lagrangian $\LL_2$ with respect to $y$, is given in strong form by 
\begin{align}
-\frac{\partial z}{\partial t}-\mathrm{div}(k\nabla z)&=-\nu_2(y-\overline{y}) \quad \text{in }\Omega \times [0,T)\label{adjointdiff1}\\
\nd{z}&=0\quad\text{on } \left(\Gb\cup\Gs\right) \times [0,T)\\
z&=0\quad\text{on }\Gt \times [0,T)\\
z&= 0\quad\text{in }\Omega \times \{T\}\\
z_1&=k_\text{out}\nd{z}\quad\text{on }\Gt \times [0,T)\\
z_2&=-k_\text{out}z\quad\text{on }\left(\Gb\cup\Gs\right) \times [0,T)\label{adjointdiff7}
\end{align}
and the state equation to (\ref{diff1})-(\ref{diff4}), which we get by differentiating the Lagrangian $\LL_2$ with respect to $z$, is given in strong form by
\begin{equation}
\td{y}-\mathrm{div}(k\nabla y)=0 \quad \text{in }\Omega \times (0,T]. \label{designdiff}
\end{equation}
We formulate explicitly the interface condition of (\ref{adjointle1})-(\ref{adjointle3}) and (\ref{adjointdiff1})-(\ref{adjointdiff7}) by
\begin{align}
\label{interface_w}\left\llbracket w\right\rrbracket & =0, & \hspace{-2cm} \left\llbracket \sigma(w)\cdot n\right\rrbracket =0& \hspace{1cm} \text{on }\Gi,\\[5pt]
\left\llbracket z\right\rrbracket & =0, & \hspace{-2cm} \left\llbracket k\nd{z}\right\rrbracket =0& \hspace{1cm}\text{on }\Gi\times[0,T).
\end{align}

There are a lot of options to prove shape differentiability of shape functionals, which depend on a solution of a PDE, and to derive the shape derivative of a PDE constrained shape optimization problem. The min-max approach \cite{Delfour-Zolesio-2001}, the chain rule approach \cite{SokoZol}, the Lagrange method of C\'{e}a \cite{Cea-RAIRO} and the rearrangement method \cite{Ito-Kunisch-Peichl} have to be mentioned in this context.
A nice overview about these approaches is given in \cite{Sturm}. 
The next subsection deals with the shape derivatives of the models above.


\subsection{Shape derivatives}
\label{subsection_shapederivatives}

In the sequel, we deduce the shape derivative of each shape functional $j_l$, where $l\in \{1,2,3,4\}$. The sum of these four shape derivatives is the shape derivative of $J$. Note that we need only the volume form of the shape derivative of $j_l$ with $l\in \{1,2,3\}$ with respect to the Steklov-Poincar\'{e} metric and the optimization techniques established in \cite{schulz2015Steklov,Welker}.
The existence of these shape derivatives is given by the theorem of Correa and Seeger \cite[theorem 2.1]{CorreaSeger}.

The following theorem gives a representation of the shape derivative of $j_1$ expressed as volume integral.
\begin{theorem}
Let $u\in H^1(\Omega,\mathbb{R}^d)$ denote the weak solution of (\ref{le1})-(\ref{le4}). Moreover, $w\in H^1(\Omega,\mathbb{R}^d)$ denote the weak solution of the adjoint equation (\ref{adjointle1})-(\ref{adjointle3}). Then the shape derivative of $j_1$ in direction $V$ is given by
\begin{equation}
\boxed{
\begin{aligned}
Dj_1(u,\Omega)[V] =\int_{\Omega} & -\sigma(u):\left(\nabla V^T\nabla w\right)-\sigma(w):\left(\nabla V^T \nabla u\right)\\ & +\text{\emph{div}}(V) \left(\nu_1\sigma(u):\epsilon(u)+\sigma(w):\nabla u\right)\hspace{.7mm}dx.
\end{aligned}
}
\label{sd_j1}
\end{equation}
\end{theorem}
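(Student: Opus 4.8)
\emph{Proof plan.}

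The plan is to differentiate $j_1$ directly with the volume formula \eqref{der_domain_int} and to remove the resulting state material derivative by means of the adjoint $w$. Applying \eqref{der_domain_int} to $j_1(u_t,\Omega_t)=\nu_1\int_{\Omega_t}\sigma(u_t):\epsilon(u_t)\,dx_t$ with integrand $p=\nu_1\,\sigma(u):\epsilon(u)$ gives $Dj_1(u,\Omega)[V]=\int_\Omega\big(\dot p+\mathrm{div}(V)\,p\big)\,dx$. Since $\sigma(u):\epsilon(u)$ is a quadratic form of $\nabla u$ with the (piecewise) constant symmetric elasticity tensor, the product rule \eqref{product_rule}, the identity \eqref{material_grad} for the material derivative of a gradient, and the symmetry $\sigma(a):\epsilon(b)=\sigma(b):\epsilon(a)$ yield $\dot p=2\nu_1\,\sigma(u):\epsilon(\dot u)-2\nu_1\,\sigma(u):(\nabla V^T\nabla u)$, so that
\begin{equation*}
Dj_1(u,\Omega)[V]=\int_\Omega\Big(2\nu_1\,\sigma(u):\epsilon(\dot u)-2\nu_1\,\sigma(u):(\nabla V^T\nabla u)+\nu_1\,\mathrm{div}(V)\,\sigma(u):\epsilon(u)\Big)\,dx .
\end{equation*}
The only term to be eliminated is the one carrying the state material derivative $\dot u$.

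To remove it I would use the adjoint state. By construction $w$ satisfies $D_u\LL_1(u,w,\Omega)[\delta u]=0$ for every $\delta u$, which is precisely the strong system \eqref{adjointle1}--\eqref{adjointle3}; taking $\delta u=\dot u$ gives $2\nu_1\int_\Omega\sigma(u):\epsilon(\dot u)\,dx=-\big(a_1(\dot u,w)-\partial_u b_1(u,w_1,w_2)[\dot u]\big)$. The right-hand side is then rewritten without $\dot u$ by differentiating the weak state equation \eqref{le1}--\eqref{le4} (which holds on every $\Omega_t$) along the perturbation and testing with the transported adjoint: this identifies $a_1(\dot u,w)-\partial_u b_1(u,w_1,w_2)[\dot u]$ with (minus) the partial, domain-only shape derivative of $a_1(u,w)-b_1(u,w_1,w_2)$, again computed by \eqref{der_domain_int}, \eqref{der_boundary_int}, \eqref{product_rule} and \eqref{material_grad}. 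The volume piece $\int_\Omega\sigma(u):\epsilon(w)\,dx$ of $a_1$ contributes, after using the symmetry of the elasticity tensor to write $\sigma(u):\epsilon(w)=\sigma(w):\nabla u$, exactly the missing terms $-\sigma(u):(\nabla V^T\nabla w)$, $-\sigma(w):(\nabla V^T\nabla u)$ and $\mathrm{div}(V)\,\sigma(w):\nabla u$. All surface contributions are to be shown to cancel: the boundary integrals over $\Go=\partial\Omega$ vanish since admissible fields $V\in\mathcal{C}^k_0(D,\R^d)$ are supported away from $\Go$ (only $\Gi$ is deformed), and the interface integral $\int_{\Gi}\llbracket(\sigma(u)\cdot n)^Tw\rrbracket\,ds$ together with its derivative vanish via the product rule for the jump bracket combined with the transmission conditions $\llbracket\sigma(u)\cdot n\rrbracket=0$ and $\llbracket w\rrbracket=0$ from \eqref{interface_cond1} and \eqref{interface_w}. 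Collecting what survives gives the four terms of the boxed formula \eqref{sd_j1}. The existence of all Eulerian derivatives used along the way — hence the legitimacy of differentiating the state constraint — is guaranteed by the theorem of Correa and Seeger \cite[theorem 2.1]{CorreaSeger}, as already mentioned.

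The main obstacle is the bookkeeping in the second step: one must carry out the elimination of $\dot u$ while remaining in volume (rather than Hadamard) form, i.e.\ without integrating by parts onto the boundary, and then verify that every surface and interface term produced in the process really cancels. This uses the higher interior regularity recorded in the Remark above, $u|_{\Omega_\mathrm{int}}\in H^2$ and $u|_{\Omega_\mathrm{out}}\in H^2$, so that the traces of $\sigma(u)\cdot n$ on $\Gi$ are well defined and the jump identities apply. A secondary, purely mechanical point is to track how differentiating the two transported gradients inside the bilinear form $a_1$ splits into the two distinct cross terms $\sigma(u):(\nabla V^T\nabla w)$ and $\sigma(w):(\nabla V^T\nabla u)$, while the Jacobian factor ($\tfrac{d}{dt}\det DF_t|_{t=0}=\mathrm{div}(V)$ for $F_t=\mathrm{id}+tV$) produces the two $\mathrm{div}(V)$ terms.
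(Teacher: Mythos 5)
Your plan follows the chain-rule route — differentiate $j_1$ directly, then eliminate the state sensitivity $\dot u$ by testing the adjoint equation with $\dot u$ and the shape-differentiated weak state equation with $w$ — whereas the paper's proof works with the Lagrangian $\LL_1$ in its min-max form \eqref{minmax} and invokes the theorem of Correa and Seeger, so that the state and adjoint equations annihilate all material-derivative terms of the differentiated Lagrangian \eqref{shape_der_1} at the saddle point. The alternative route is legitimate in principle, but as written your plan has two concrete problems.

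First, the bookkeeping does not close. Your own step 1 correctly gives $\dot p=2\nu_1\,\sigma(u):\epsilon(\dot u)-2\nu_1\,\sigma(u):(\nabla V^T\nabla u)$, and the elimination mechanism you describe (adjoint tested with $\dot u$, differentiated state equation tested with the transported $w$) replaces only the term $2\nu_1\int_\Omega\sigma(u):\epsilon(\dot u)\,dx$ by $\int_\Omega-\sigma(u):(\nabla V^T\nabla w)-\sigma(w):(\nabla V^T\nabla u)+\mathrm{div}(V)\,\sigma(w):\nabla u\,dx$. Nothing in your argument touches the purely geometric term $-2\nu_1\,\sigma(u):(\nabla V^T\nabla u)$, so "collecting what survives" yields
\begin{equation*}
\int_{\Omega} -\sigma(u):\left(\nabla V^T\nabla w\right)-\sigma(w):\left(\nabla V^T\nabla u\right)-2\nu_1\,\sigma(u):\left(\nabla V^T\nabla u\right)+\mathrm{div}(V)\left(\nu_1\sigma(u):\epsilon(u)+\sigma(w):\nabla u\right)dx
\end{equation*}
and not the boxed expression \eqref{sd_j1}; the final step of your plan asserts the target formula without accounting for this term, and that is exactly where the proof as described would fail. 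Related to this, the identity you invoke, $2\nu_1\int_\Omega\sigma(u):\epsilon(\dot u)\,dx=-\bigl(a_1(\dot u,w)-\partial_u b_1(u,w_1,w_2)[\dot u]\bigr)$, is what $\partial_u\LL_1=0$ gives, and it corresponds to an adjoint with Neumann datum $-2\nu_1 f$; the system \eqref{adjointle1}--\eqref{adjointle3} you cite carries $-\nu_1 f$, so the normalization of $w$ must be checked before the $\mathrm{div}(V)$ contributions can be combined as claimed.

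Second, the appeal to Correa--Seeger is misplaced for your route. That theorem yields differentiability of the min-max value of the Lagrangian and is precisely the device the paper's saddle-point argument uses to avoid differentiating $t\mapsto u_t$; it does not provide existence of the material derivative $\dot u$. Your direct approach needs $\dot u$ to exist in $H^1$ (e.g. via a sensitivity/implicit-function argument for the transported state equation), and additionally needs that $\dot u$ is an admissible test function for the adjoint (vanishing near $\Gb$, which is plausible since $V\equiv 0$ near $\Go$, but should be stated) and that the differentiated transmission conditions hold so that the interface integrals built from $\llbracket\sigma(\dot u)\cdot n\rrbracket$ and $\llbracket\dot u\rrbracket$ cancel in the same way the paper cancels its interface integral in \eqref{shape_der_2} using \eqref{interface_cond1} and \eqref{interface_w}. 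These points are fixable, but they are not covered by the references you give.
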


\begin{proof}
We consider the Lagrangian $\LL_1$. In analogy to \cite[chapter 10, subsection 5.2]{Delfour-Zolesio-2001}, we can verify that 
\begin{equation}
\label{minmax}
j_1(u,\Omega)= \min_{u\in H^1(\Omega,\mathbb{R}^d)} \max_{w\in H^1(\Omega,\mathbb{R}^d)} \LL_1 (u,w,\Omega)
\end{equation}
holds.
We apply the theorem of Correa and Seeger on the right-hand side of (\ref{minmax}). The verification of the assumptions of this theorem can be checked in the same way as in \cite[chapter 10, subsection 6.4]{Delfour-Zolesio-2001}. 
 
Applying the rules for differentiating volume and surface integrals given in (\ref{der_domain_int}) and (\ref{der_boundary_int}) yields
\begin{equation}
\label{shape_der_1}
\begin{split}
& D\LL_1(u,w,\Omega)[V]\\
&=\int_{\Omega}\nu_1 D_m\left(\sigma(u):\epsilon(u)\right)+D_m\left(\sigma(u):\epsilon(w)\right)\\
&\hspace*{10mm}+\mathrm{div}(V)\left(\nu_1 \sigma(u):\epsilon(u)+\sigma(u):\epsilon(w)\right)dx\\
&\hspace*{4mm}-\int_{\Gi} D_m\left( \left\llbracket \left(\sigma(u)\cdot n\right)^Tw\right\rrbracket\right) + \mathrm{div}_{\Gi}(V)\left\llbracket \left(\sigma(u)\cdot n\right)^Tw \right\rrbracket ds  \\
&\hspace*{4mm}-\int_{\Go}D_m\left(\left(\sigma(u)\cdot n\right)^Tw\right) + \mathrm{div}_{\Go}(V)\left( \left(\sigma(u)\cdot n\right)^Tw\right) \hspace{.7mm}ds\\
&\hspace*{4mm}-\int_{\Gb}D_m\left(w_1^T u\right)+ \mathrm{div}_{\Gb}(V) w_1^T u \hspace{.7mm}ds \\
&\hspace*{4mm}-\int_{\Go\setminus\Gb}D_m\left(w_2^T \left(\sigma (u)\cdot n-f\right)\right)\\
&\hspace*{27mm}+ \mathrm{div}_{\Go\setminus\Gb}(V)\hspace{.7mm}w_2^T \left(\sigma (u)\cdot n-f\right)\hspace{.3mm}ds.
\end{split}
\end{equation}
%
Note that 
\begin{equation}
\label{umformung}
\sigma(u):\epsilon(w)=\sigma(u):\nabla w
\end{equation}
holds because $\sigma(u)$ is symmetric (cf.~\cite{Sewell}). Moreover, we have
\begin{align}
D_m(\sigma(u):\nabla w)=\mu D_m\left((\nabla u+\nabla u^T): \nabla w\right)+\lambda D_m\left(\text{div}(u)\text{div}(w)\right)
\end{align}
due to the definition of $\sigma$ and $\epsilon$.
Combining (\ref{product_rule}) with (\ref{material_grad}) in $\mathbb{R}^d$ yields
\begin{equation}
\label{div_umformung}
D_m(\text{div}(u))=D_m(I:\nabla u)=\text{div}(\dot{u})-I:(\nabla V^T\nabla u).
\end{equation}
By applying the product rule and appropriate transformations we get
\begin{equation}
\label{derivative_sigmaespilon}
\begin{split}
& D_m(\sigma(u):\epsilon(w))\\
&= \sigma(\dot{u}):\nabla w+\sigma(u):\nabla \dot{w}-\sigma(u):(\nabla V^T\nabla w)-\sigma(w):(\nabla V^T\nabla u)
\end{split}
\end{equation}
due to (\ref{material_grad}) and (\ref{umformung})-(\ref{div_umformung}).
Since the outer boundary $\Go$ is fixed, we can choose the deformation vector field $V$ equals zero in small neighbourhoods of $\Go$. Moreover, each material derivative in small neighbourhoods of $\Go$ is equal to zero. Thus, the outer integrals in (\ref{shape_der_2}) are not further considered.
From (\ref{shape_der_1}) we obtain the following under consideration of the adjoint and design equation in weak form and by appropriate transformations:
\begin{equation}
\label{shape_der_2}
\begin{split}
 D\LL_1(u,w,\Omega)[V]
&=\int_{\Omega}-\sigma(u):\left(\nabla V^T\nabla w\right)-\sigma(w):\left(\nabla V^T \nabla u\right)\\
&\hspace*{10mm}+\mathrm{div}(V)\left(\nu_1\sigma(u):\epsilon(u)+\sigma(w):\nabla u\right)\hspace{.7mm}dx\\
&\hspace*{4mm}+\int_{\Gi}  \left\llbracket \left(\sigma(w)\cdot n\right)^T\dot{u} - D_m\left(\left(\sigma(u)\cdot n\right)^T\right)w\right\rrbracket \\ 
&\hspace*{15mm}+ \mathrm{div}_{\Gi}(V)\left\llbracket \left(\sigma(u)\cdot n\right)^Tw \right\rrbracket ds 
\end{split}
\end{equation}
Due to the identity
$\left\llbracket \Phi \Psi\right\rrbracket=\left\llbracket \Phi\right\rrbracket \Psi_\text{out} +\Phi_\text{int} \left\llbracket \Psi\right\rrbracket= \Phi_\text{out} \left\llbracket \Psi\right\rrbracket+\left\llbracket \Phi\right\rrbracket \Psi_\text{int}$, 
which implies
\begin{equation*}
\left\llbracket \Phi \Psi\right\rrbracket=0 \text{ if } \left\llbracket \Phi\right\rrbracket=0\wedge \left\llbracket \Psi\right\rrbracket=0,
\end{equation*}
and the interface conditions (\ref{interface_cond1}) and (\ref{interface_w}), the interface integral vanishes in (\ref{shape_der_2}). By applying the theorem of Correa and Seeger, we obtain (\ref{sd_j1}).
\end{proof}

Now, we consider the shape functional $j_2$ and the diffusion problem (\ref{diff1})-(\ref{diff4}). If we consider $\LL_2$, its shape derivative can be deduced analogously to the computations in \cite{schulz2014structure}.
Thus, we get
\begin{equation}
\boxed{
\begin{aligned}
Dj_2(y,\Omega)[V] =\int_{0}^{T}\hspace{-.1cm}\int_{\Omega} & -k\nabla y^T\left(\nabla V+\nabla V^T\right)\nabla z\\ &+\text{div}(V) \left( \frac{\nu_2}{2}(y-\bar{y})^2 + \td{y}z+k\nabla y^T\nabla z\right)\hspace{.7mm}dx\hspace{.5mm}dt.
\end{aligned}
}
\label{sd_j2}
\end{equation}

The shape derivative of $j_3$, which is responsible for a space-filling design of $\Omega_\text{int}$, is given by
\begin{equation}
\boxed{
Dj_3(\Omega)[V] =\nu_3\int_{\Omega_{\text{out}}} \text{div}(V) \hspace{.7mm}dx.\label{sd_j3}
}
\end{equation}
It results directly from an application of (\ref{der_domain_int}).
 
Moreover, the objective function $J$ includes the perimeter regularization term $j_4$.
We get the shape derivative of this regularization term by applying (\ref{der_boundary_int}):
\begin{align*}
Dj_4(\Omega)[V] & =\nu_4\int_{\Gi}\text{div}_\Gi(V) \hspace{.7mm}ds=\nu_4\int_{\Gi}\text{div}_\Gi(\left<V,n\right>n) \hspace{.7mm}ds\\
&= \nu_4\int_{\Gi}\left<V,n\right>\text{div}_\Gi(n) \hspace{.7mm}ds,
\end{align*}
i.e.,
\begin{equation}
\boxed{
Dj_4(\Omega)[V] =\nu_4\int_{\Gi}\kappa\left<V,n\right> ds,\label{sd_j4}
}
\end{equation}
where $\kappa:=\text{div}_\Gi(n)$ denotes the mean curvature of $\Gi$. 

So far, we have deduced derivative. However, in order to optimize on shape spaces, we need the gradient with respect to the inner product under consideration. In the next section, we comment on shape spaces and metrics.

\section{Optimization based on Steklov-Poincar\'{e} metrics}\label{section_algorithm}

As pointed out for example in \cite{schulz2015Steklov,schulz2014structure,Welker}, PDE constrained shape optimization problems can be embedded in the framework of optimization on shape spaces. This section summarizes the way from shape derivatives to an entire optimization algorithm in a suitable shape space.

In our setting, we think of shapes as boundary contours of deforming objects. 
We consider so-called prior shapes $\Gamma_0$. These are boundaries $\Gamma_0=\partial\mathcal{X}_0$ of connected and compact subsets $\mathcal{X}_0\subset \Omega\subset\R^d$ with $\mathcal{X}_0\neq\emptyset$, where $\Omega$ denote bounded Lipschitz domains. Let $\mathcal{X}_0$ be Lipschitz domains and called prior sets.
Shapes -- in our setting -- arise from $H^1$-deformations of such prior sets $\mathcal{X}_0$. These $H^1$-deformations, evaluated at a prior shape $\Gamma_0=\partial \mathcal{X}_0$, give deformed shapes $\Gamma$ if the deformations are injective and continuous. These shapes are called of class $H^{1/2}$ and proposed firstly in \cite{schulz2015Steklov}. They are defined by
\begin{equation}\label{shape_mainifold}
{\cal B}^{1/2}(\Gamma_0,\R^d):=
{\cal H}^{1/2}(\Gamma_0,\R^d)\big\slash \mbox{Homeo}^{1/2}(\Gamma_0),
\end{equation}
where ${\cal H}^{1/2}(\Gamma_0,\R^d)$ is given by
\begin{equation}
\label{force-ball}
\begin{split}
{\cal H}^{1/2}(\Gamma_0,\R^d):=
\{w\colon \Gamma_0\to \Omega \colon & \exists W\in H^1(\Omega,\Omega) \text{ s.t.}\\ & W\hspace{-.5mm}\,\rule[-2mm]{.1mm}{4mm}_{\hspace{.5mm}\Gamma_0} \text{ injective, continuous}\text{, }W\hspace{-.5mm}\,\rule[-2mm]{.1mm}{4mm}_{\hspace{.5mm}\Gamma_0}=w\}
\end{split}
\end{equation}
and $\mbox{Homeo}^{1/2}(\Gamma_0)$ is defined by
\begin{equation}
\mbox{Homeo}^{1/2}(\Gamma_0)\\:=
\{ w\colon w\in {\cal H}^{1/2}(\Gamma_0,\R^d)\text{, }w\colon \Gamma_0\to \Gamma_0 \text{ homeomorphism}\}.
\end{equation}
In the following, we assume that ${\cal B}^{1/2}\left(\Gamma_0,\R^d\right)$ has a manifold structure. If necessary, we can refine the space ${\cal B}^{1/2}(\Gamma_0,\R^d)$ as already described in \cite{Welker}. However, this conceivable limitation leaves the following theory untouched. If $\Gamma\in{\cal B}^{1/2}(\Gamma_0,\R^d)$ is smooth enough to admit a normal vector field $n$, the following isomorphisms arise from definition (\ref{force-ball}):
\begin{equation}
\label{isomophism}
\begin{split}
 T_\Gamma {\cal B}^{1/2}(\Gamma_0,\R^d) & \cong
\{h\colon  h=\phi n  \text{ a.e.}\text{, } \phi\in H^{1/2}(\Gamma) \text{ continuous} \}
\\ & \cong
\{\phi\colon \phi\in H^{1/2}(\Gamma) \text{ continuous} \}
\end{split}
\end{equation}

We consider the following scalar products, the so-called \emph{Steklov-Poincar\'{e} metrics} (cf.~\cite{schulz2015Steklov}):
\begin{equation}\label{scp}
\begin{split}
g^S\colon H^{1/2}(\Gamma_\text{int})\times H^{1/2}(\Gamma_\text{int}) & \to \R,\\
(\alpha,\beta) &\mapsto \langle\alpha,(S^{pr})^{-1}\beta\rangle=
\int_{\Gamma_\text{int}} \alpha(s)\cdot [(S^{pr})^{-1}\beta](s)\ ds
\end{split}
\end{equation}
Here $S^{pr}$ denotes the projected Poincar\'e-Steklov operator which is given by
\begin{equation}
S^{pr}\colon  H^{-1/2}(\Gamma) \to H^{1/2}(\Gamma),\
         \alpha \mapsto (\gamma_0 U)^T n,
\end{equation}
where $\gamma_0\colon  H^1_0(\Omega,\R^d) \to H^{1/2}(\Gamma,\R^d)$, $U \mapsto U\,\rule[-2mm]{.1mm}{4mm}_{\, \Gamma}$ and $U\in H^1_0(\Omega,\R^d)$ solves the Neumann problem
\begin{equation}\label{weak-elasticity-N2}
a(U,V)=\int_{\Gamma} \alpha\cdot (\gamma_0 V)^T n\ ds\quad \forall\hspace{.3mm}  V\in H^1_0(\Omega,\R^d)
\end{equation}
with $a$ being a symmetric and coercive bilinearform.

We are now able to formulate an optimization algorithm on the shape space ${\cal B}^{1/2}\left(\Gamma_0,\R^d\right)$ with respect to $g^S$. Before we can do that, we have to state its connection to shape calculus. Due to the Hadamard structure theorem (cf.~\cite[theorem 2.27]{SokoZol}), there exists a scalar distribution $r$ on the boundary of the domain under consideration. If we assume $r\in L^1(\Gamma)$ and $V\,\rule[-2mm]{.1mm}{4mm}_{\hspace{.6mm}\Gamma}=\alpha n$, the shape derivative can be expressed as the boundary integral 
\begin{equation}
\label{HadamardConcisely}
DJ_{\Gamma}[V]=\int_{\Gamma} \alpha \hspace{.5mm} r \ ds.
\end{equation}
Due to the isomorphism (\ref{isomophism}) and the expression (\ref{HadamardConcisely}), we can state the connection of $\mathcal{B}^{1/2}(\Gamma_0,\R^d)$ with respect to the Steklov-Poincar\'{e} metric $g^S$ to shape calculus.
The distribution $r$ is often called the \emph{shape gradient}. This is confusing, because one has to note that gradients always depend on chosen scalar products defined on the space under consideration. It rather means that $r$ is the usual $L^2$-shape gradient. However, we have to find a representation of the shape gradient with respect to $g^S$.
Such a representation $h\in T_{\Gamma} {\cal B}^{1/2}(\Gamma_0,\R^d) \cong \{h\colon h\in H^{1/2}(\Gamma) \text{ continuous} \}$ is determined by
\begin{equation}
\label{connection1}
g^S(\phi,h)=\left(r,\phi\right)_{L^2(\Gamma)},
\end{equation}
which is equivalent to
\begin{equation}
\label{connection2}
\int_{\Gamma} \phi(s)\cdot [(S^{pr})^{-1}h](s) \ ds=\int_{\Gamma} r(s)\phi(s) \ ds
\end{equation}
for all continuous $\phi\in H^{1/2}(\Gamma)$.
Based on the connection (\ref{connection1}) we can formulate an algorithm to solve PDE constrained shape optimization problems on $\mathcal{B}^{1/2}(\Gamma_0,\R^d)$ with respect to $g^S$. From (\ref{connection2}) we get $h=S^{pr}r=(\gamma_0 U)^T n$, where $U\in H^1_0(\Omega,\R^d)$ solves
\begin{equation}
\label{main}
a(U,V)=\int_{\Gamma} r\cdot (\gamma_0 V)^T n \ ds
=DJ_{\Gamma}[V]=DJ_\Omega[V] \quad \forall\hspace{.3mm}  V\in H^1_0(\Omega,\R^d).
\end{equation}
This means that we get the gradient representation $h$ and the mesh deformation $U$ all at once and that we have to solve 
\begin{equation}
a(U, V) = b(V)
\label{deformatio_equation}
\end{equation}
for all test functions $V$ in the optimization algorithm,
where $b$ is a linear form and given by
$$b(V):=DJ_\text{vol}(\Omega)[V]+DJ_\text{surf}(\Omega)[V].$$
$J_\text{surf}(\Omega)$ denotes parts of the objective function leading to surface shape derivative expressions, e.g., perimeter regularizations. The shape derivative $DJ_\text{surf}(\Omega)[V]$ of these terms are incorporated as Neumann boundary conditions. Parts of the objective function leading to volume shape derivative expressions are denoted by $J_\text{vol}(\Omega)$. 
However, note that from a theoretical point of view the volume and surface shape derivative formulations have to be equal to each other for all test functions. Thus, $DJ_\text{vol}[V]$ is assembled only for test functions $V$ whose support includes $\Gamma$, i.e.,
$$DJ_\text{vol}(\Omega)[V]=0 \quad \forall\hspace{.3mm}  V \text{ with } \text{supp}(V)\cap\Gamma=\emptyset.$$
We call (\ref{deformatio_equation}) the \emph{deformation equation}. 
The entire optimization algorithm is given in figure \ref{algorithm} and explained in detail in the next section. 

\tikzset{mybox/.style={
rectangle,
rounded corners=2mm,
thick,
draw=black,
text width=30em,
text centered,
drop shadow,fill=white}
}
\begin{figure}
\begin{center}
\begin{tikzpicture}[node distance=.5cm,>=stealth',bend angle=45,auto]
\tikzstyle{every node}=[font=\small]
\node[mybox] (interp) {\textbf{Evaluate measurements}};
\node[mybox, below = of interp] (primal) {\textbf{Solve the state and adjoint equation with multigrid}};
\node[mybox, below = of primal] (grad) {\textbf{Assemble the deformation equation:}\\[.1cm]
\begin{itemize}
\item Assemble the volume form of the shape derivative only for $V$ with $\Gamma\cap \mathrm{supp}(V)\not=\emptyset$ as a source term
\item Assemble derivative contributions which are in surface formulations into the right hand-side in form of Neumann boundary conditions
\end{itemize}};
\node[mybox, below = of grad] (Le1) {\textbf{Solve the deformation equation with multigrid}};
\node[mybox, below = of Le1] (Le2) {\textbf{Apply the resulting deformation to the finite element mesh}};
\path (interp) edge[->,thick] (primal);
\path (primal) edge[->,thick] (grad);
\path (grad) edge[->,thick] (Le1);
\path (Le1) edge[->,thick] (Le2);
\draw [->,thick] ($ (Le2.west) $) -- ++(-0.8,0) -| ($ (interp.west) - (0.8,0) $) -- ($ (interp.west) $) ;
\end{tikzpicture}
\caption{Optimization algorithm}
\label{algorithm}
\end{center}
\end{figure}
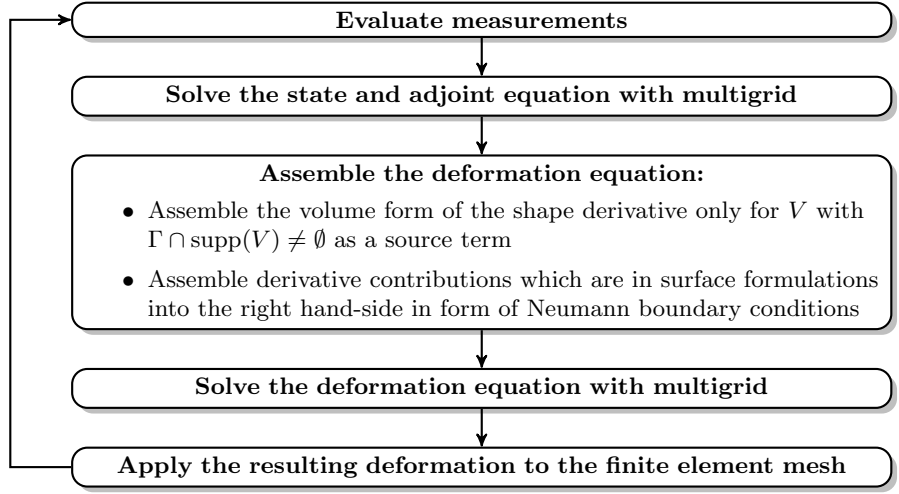

In the setting of section \ref{section_model}, each $\Gamma_i$ is a shape in the above sense, i.e., an element of $\mathcal{B}^{1/2}\left(\Gamma_0,\mathbb{R}^d\right)$.
The volume formulation is given by
$$DJ_\text{vol}(\Omega)[V]= Dj_1(\Omega)[V]+Dj_2(\Omega)[V]+Dj_3(\Omega)[V]$$
and the surface formulation by
$$DJ_\text{surf}(\Omega)[V]=Dj_4(\Omega)[V].$$

The next section is devoted to the numerical realization of algorithm \ref{algorithm} in order to solve the PDE constrained model problems given above.

\section{Numerical results}\label{section_results}

We focus on three numerical experiments, which are selected in order to demonstrate challenges arising for large-scale multigrid shape optimizations.
All results involved are computed at the High Performance Computing Center Stuttgart using the machine HAZELHEN with up to $16\,384$ cores.

In the \textit{first experiment}, we choose $\nu_1=\nu_2=0$ such that we end up with a pure geometric optimization problem without any PDE constraints.
Hereby, we want to demonstrate a proper choice of boundary conditions in the mesh deformation in order to obtain results for periodic domains.
This optimization problem is motivated by the question for a space-filling cell design with least area of surfaces and only one type of cells.
It goes back to the 19th century and is also known as the Kelvin problem, for which he proposed a solution (cf.~\cite{thomson1887division}) based on cells as depicted in figure \ref{fig_volume_only_optimal}.
His conjecture was disproved by a counter example in \cite{weaire1994counter}.
However, these authors propose a solution with two types of cells.
The resulting optimal shape might be understood as the result of a biological growing process and used as a building block for finite element models of the human skin (cf.~\cite{muha2011effective}).
\begin{figure}
\centering
\begin{subfigure}{.5\textwidth}
  \centering
  \includegraphics[width=0.8\linewidth]{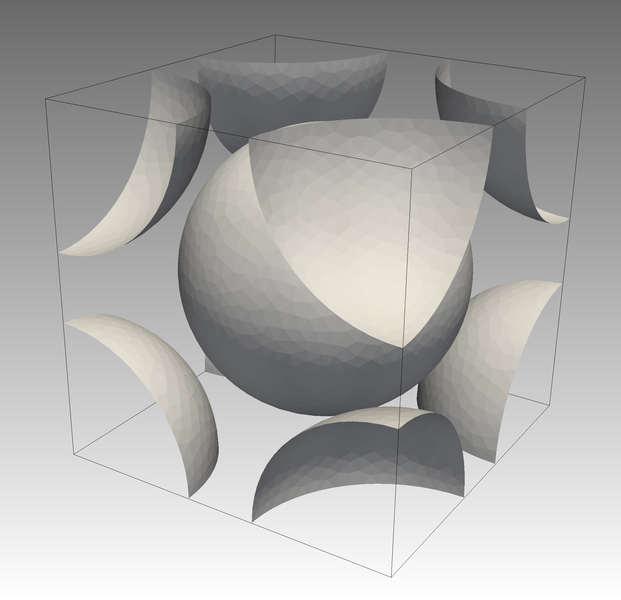}
  \caption{Initial configuration with symmetries}
  \label{fig_volume_only_initial}
\end{subfigure}%
\begin{subfigure}{.5\textwidth}
  \centering
  \includegraphics[width=0.8\linewidth]{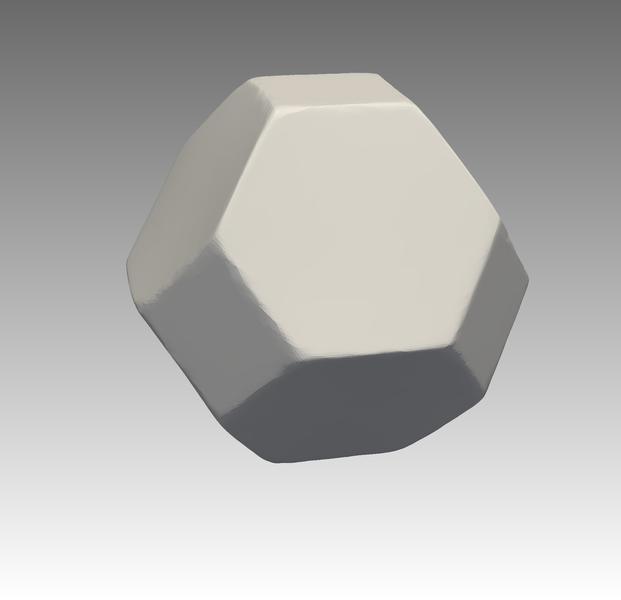}
  \caption{Optimal shape}
  \label{fig_volume_only_optimal}
\end{subfigure}
\caption{Approximation of a Kelvin cell in a symmetric domain}
\label{fig_volume_only}
\end{figure}
Figure \ref{fig_volume_only_initial} depicts the initial geometry for the optimization, where we want to exploit symmetries on all outer surfaces.
The shape derivative of the objective \eqref{objective} for this particular case is given as the sum of the derivatives \eqref{sd_j3} and \eqref{sd_j4} with $\nu_3 = \nu_4 = 1.0$.
Note that this results in a mixture of volume and surface formulations.

\begin{figure}
\centering
\begin{subfigure}{.5\textwidth}
  \centering
  \includegraphics[width=0.9\linewidth]{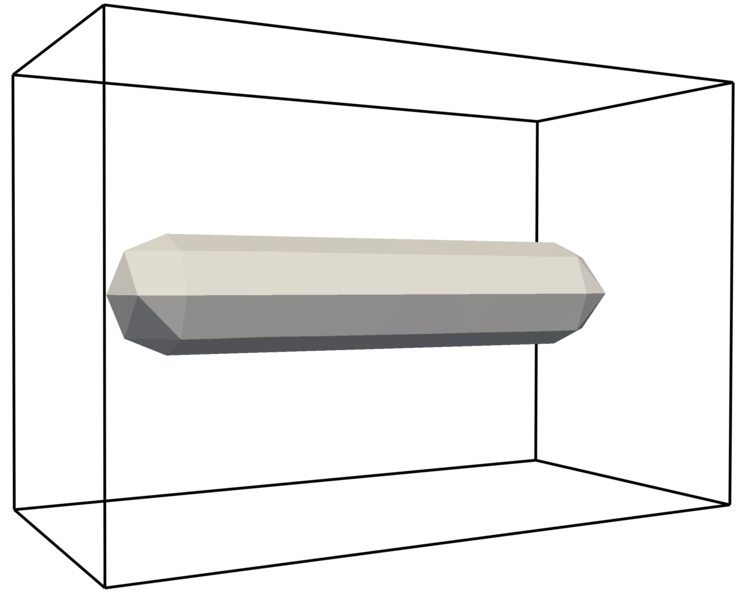}
  \caption{Inital shape}
  \label{fig_initial_tube}
\end{subfigure}%
\begin{subfigure}{.5\textwidth}
  \centering
  \includegraphics[width=0.9\linewidth]{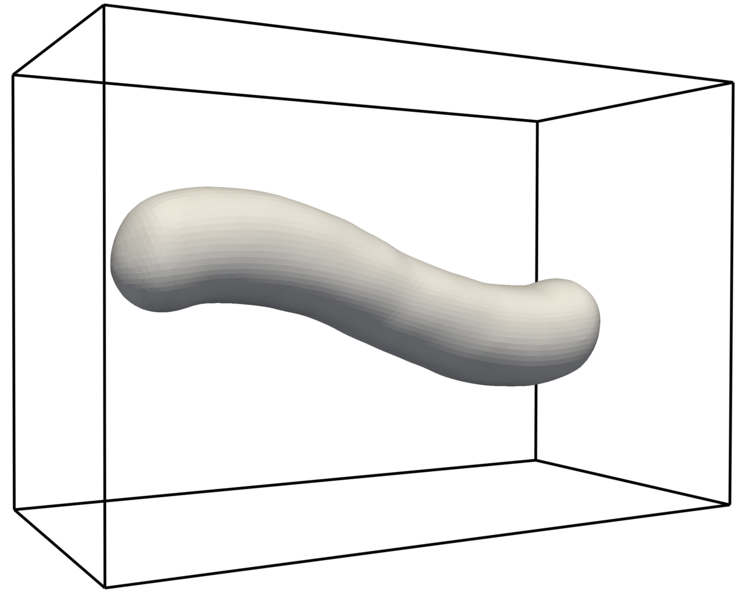}
  \caption{Optimal shape}
  \label{fig_optimal}
\end{subfigure}
\caption{Sharp-edged initial geometry with identical coarse and fine grid under 6 levels of hierarchical refinements and optimal solution on finest level}
\label{fig_tube_optim}
\end{figure}
\begin{figure}
\includegraphics[width=1.0\textwidth]{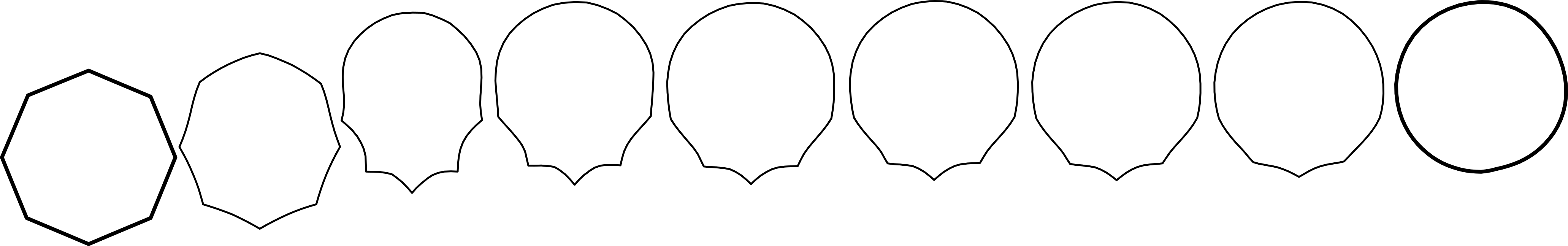}
\caption{Cross-section of the tube at $25\%$ of the length for the first iterations with perimeter regularization}
\label{fig_tube_multigrid}
\end{figure}

The linear and bilinear form defining the shape metric \eqref{deformatio_equation}, which gives also the information for the mesh deformation field $U$, is chosen to be the weak form of a linear elasticity model.
In order to model the periodicity of the domain, we choose the following Dirichlet and Neumann conditions at the outer boundaries depending on the components $U = (U_x, U_y, U_z)^T$:
\begin{align} 
U_x & = 0,\quad \sigma\left( (0, U_y, U_z)^T \right) \cdot n  = 0 \qquad \text{on}\; \Gfront \cup \Gback \label{eq_def_2} \\ 
U_y & = 0,\quad \sigma\left( (U_x, 0, U_z)^T \right) \cdot n  = 0 \qquad \text{on}\; \Gl \cup \Gr \label{eq_def_3} \\ 
U_z & = 0,\quad \sigma\left( (U_x, U_y, 0)^T \right) \cdot n  = 0 \qquad \text{on}\; \Gb \cup \Gt \label{eq_def_4} 
\end{align}
This particular choice ensures that nodes are allowed to slide within $\Go$, but not to leave the planes.
The Lam\'e parameter are fixed to $\lambda = 0.01$ and $\mu = 0.1$.
We apply this shape metric to all numerical examples in this sections, due to the fact that the sliding conditions minimize the influence of the outer boundary on the optimization of $\Gi$.
Especially, when $\Gi$ intersects $\Go$ like in the volume experiment shown in figure \ref{fig_volume_only}, the sliding effect at boundaries is obligatory. 

The result of the optimization is visualized in figure \ref{fig_volume_only_optimal}.
It can be identified as an approximation to a truncated octahedron according to Kelvins conjecture.
This shape is also denoted as a tetrakaidecahedron with 6 square faces and 8 hexagonal faces.
We obtain this result after 12 gradient steps on a fine grid with approximately $3.9\cdot 10^6$ and a coarse grid with $60\,533$ finite elements.

Throughout this section all linear systems are solved with the multigrid preconditioned conjugated gradient solver of the software PETSC.
Here a symmetric Gauss-Seidel smoother is applied and the coarse grid solver is a direct factorization computed with the SUPERLU-DIST library.
This choice is possible, since all PDEs to be solved, i.e., the parabolic diffusion equation and linear elasticity, are discretized as symmetric, positive definite matrices.
Moreover, we choose linear finite elements on tetrahedral grids for the discretization of all PDE models involved.
Load balancing is performed with the PARMETIS graph partitioning library.
As mentioned above, it is sufficient to partition and balance only with respect to tetrahedrons and not also surface triangles at $\Gi$.
This is due to the fact that we plug in the volume form of shape derivatives whenever possible.
Thus, there are only very few surface-only operations, which do not dramatically affect the scalability.
By solving the deformation equation \eqref{deformatio_equation} we then obtain a deformation field $U$.
This vector field is added as a deformation to all nodes in the finite element meshes on all multigrid levels.
This means that the fine grid solution $U$ is interpolated at the nodes of the coarser grids.
The multigrid hierarchy is designed such that for the initial geometry the nodes of the coarse grid are a subset of the nodes of the fine grid.
Thus, this property is maintained throughout the whole optimization process.
In principle, this means that the shape optimization takes place at the fine grid and the coarse grids are carried along during this process for the preconditioner of the linear solver.

The \textit{second numerical test} is chosen in order to touch upon the main topic of this paper, namely the interaction of multigrid solvers and shape optimization.
We are in a simplified situation of the optimization problem \eqref{eq_minimization}, which is only subject to the diffusion model \eqref{diff1}-\eqref{diff4}.
Thus, the objective function is of tracking type with regularization corresponding to $\nu_1 = \nu_3 = 0$.
The domain $\Omega = \Omega_\text{out} \cupdot \Omega_\text{int}$ has only one inclusion with a jumping diffusion coefficient $k_\text{out} = 1.0$ and $k_\text{int} = 0.001$.
Figure \ref{fig_tube_optim} shows this situation.
The initial geometry is depicted on the left hand side and the target is shown on the right hand side.
Data measurements $\bar{y}$ are generated in terms of the same model equation and a similar shape to the one in figure \ref{fig_optimal}.
We assume two points in time for measurements -- one at the final time $T=15$ and one after $7.5$ seconds.

\begin{figure}
\centering
\begin{subfigure}{.5\textwidth}
  \centering
  \includegraphics[width=1.0\linewidth]{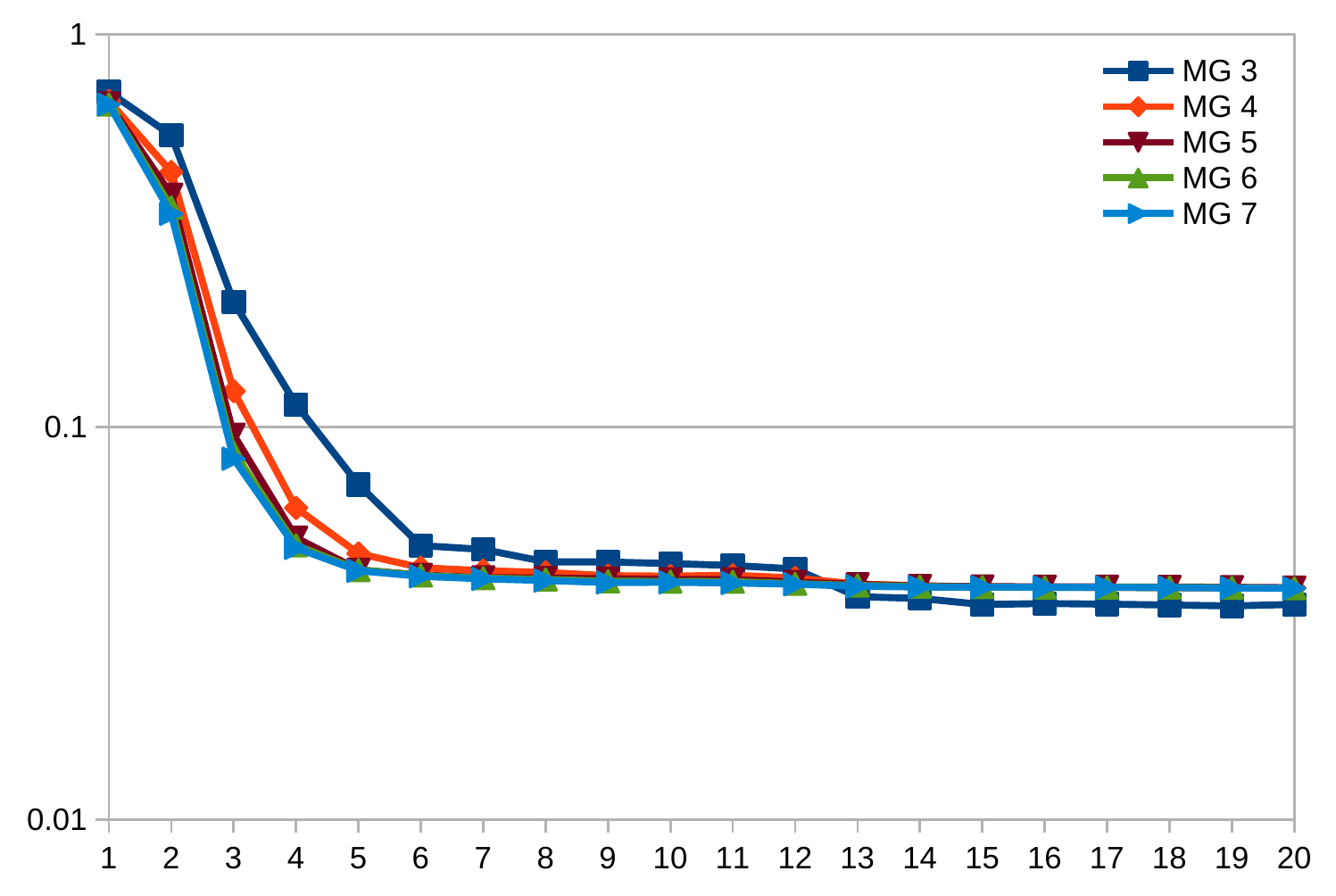}
  \caption{Objective value}
  \label{fig_objective}
\end{subfigure}%
\begin{subfigure}{.5\textwidth}
  \centering
  \includegraphics[width=1.0\linewidth]{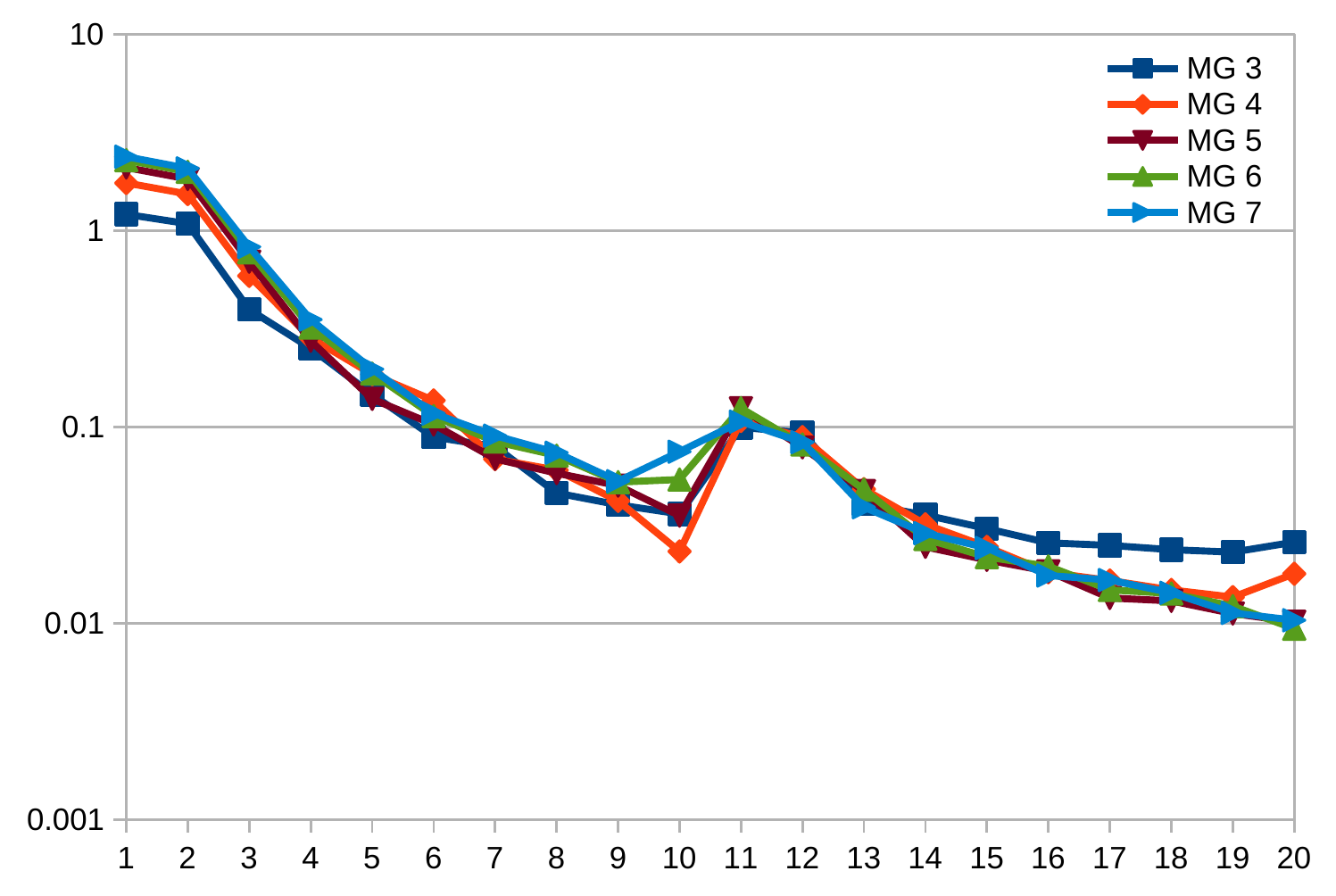}
  \caption{Gradient in $g^S$ norm}
  \label{fig_gradient}
\end{subfigure}
\caption{Objective value and gradient in $g^S$ norm for the pure parabolic test case with regularization in the first 10 iterations}
\label{fig_objecive_and_gradient}
\end{figure}

Measurements are assumed to be represented within 10\,000 Gaussian type radial basis functions.
This involves expensive but necessary computations, since $\bar{y}$ has to be evaluated on varying meshes during the optimization.
A general mesh to mesh interpolation on a distributed memory computer can not be computed efficiently, which makes a detour via radial basis functions to the expensive yet scalable method of choice.

The optimization problem is discretized using 7 levels of hierarchical mesh refinements starting from a coarse grid with $3\,390$ tetrahedral elements to a fine grid with approximately $8.89\cdot 10^8$.
For the diffusion we choose a backward Euler time stepping with a step-size of $\Delta t = 1.5$.
In figure \ref{fig_initial_tube}, we observe sharp edges in the geometry, which, in principle, should resolve a tube with radius $0.5$ and length $5$.
This is due to the hierarchical grid structure, which is typical for multigrid methods, since coarse and fine grid have to resolve the same geometry.
In our particular numerical test, this property of the mesh hierarchy together with the shape metric lead to problems in the discretization.

Figure \ref{fig_tube_multigrid} shows a cross-section of $\Omega_\text{int}$ at $25\%$ of the tubes length during the first iterates of the optimization.
We observe that edges, which have a smaller influence on the objective function than planes, tend to remain at their initial position.
This effect is intensified by the choice of the shape metric \eqref{deformatio_equation}.
Hereby, the impact of the shape derivative \eqref{sd_j2} on the geometry can be understood as a traction on the boundary, which is stronger on large planes compared to sharp edges.
The application of a limited memory BFGS update techniques also influences this behaviour, since relatively large steps are chosen especially in the first iterations.
\begin{figure}
\center
\includegraphics[width=0.6\textwidth]{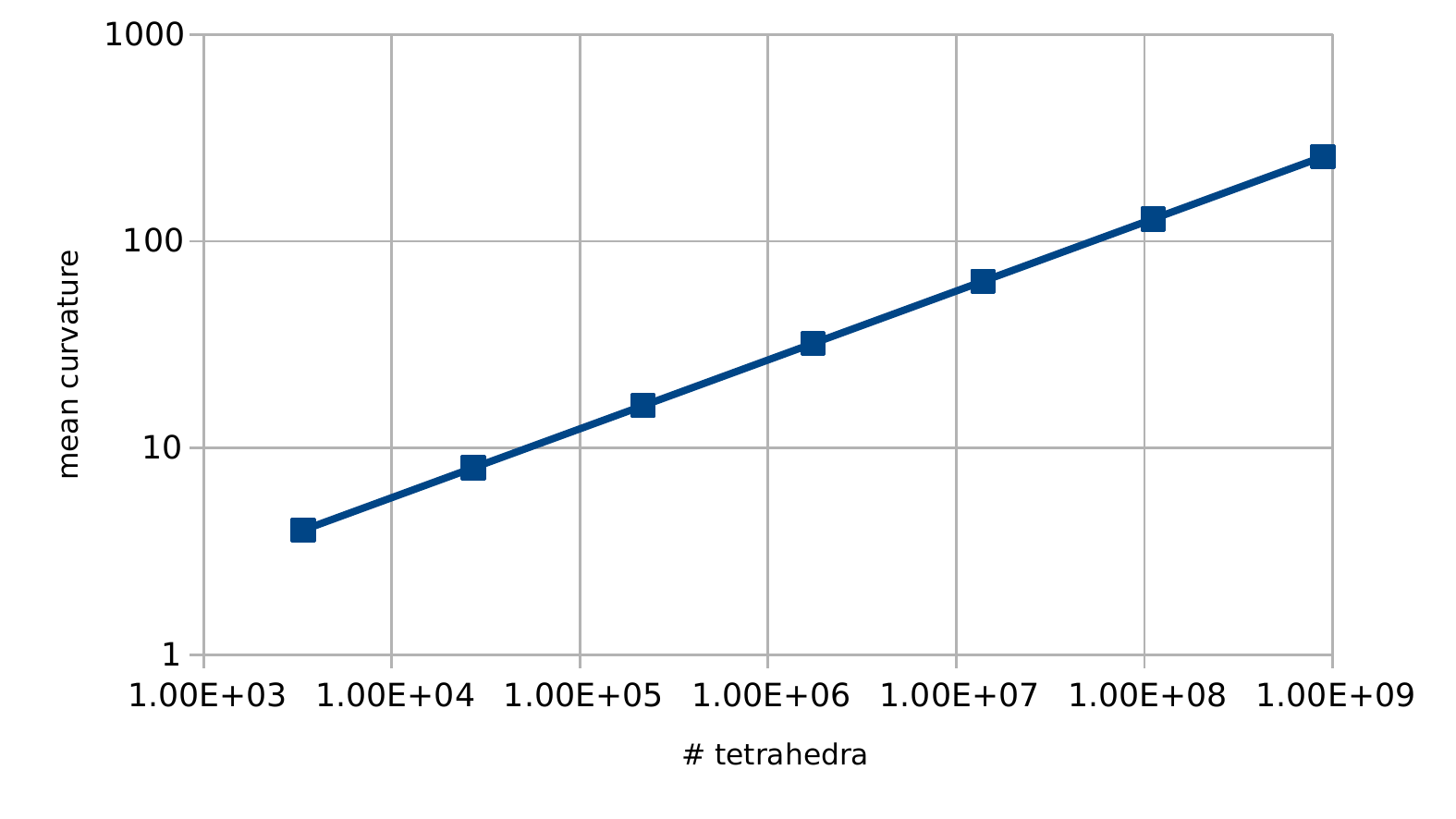}
\caption{log-log-plot of $L^\infty$-norm of mean curvature with respect to number of tetrahedral finite elements}
\label{fig_tube_curvature}
\end{figure}
\begin{figure}[h]
\centering
\begin{subfigure}{.5\textwidth}
  \centering
  \includegraphics[width=0.9\linewidth]{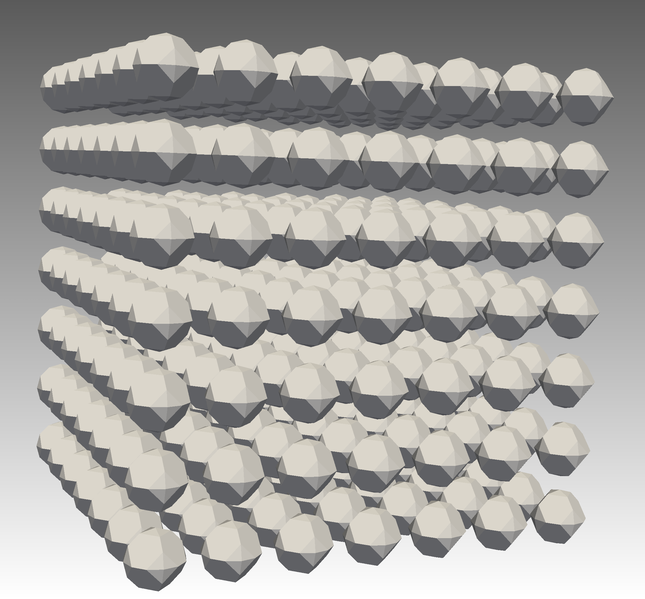}
  \caption{Initial grid}
  \label{fig_initial_grid}
\end{subfigure}%
\begin{subfigure}{.5\textwidth}
  \centering
  \includegraphics[width=0.9\linewidth]{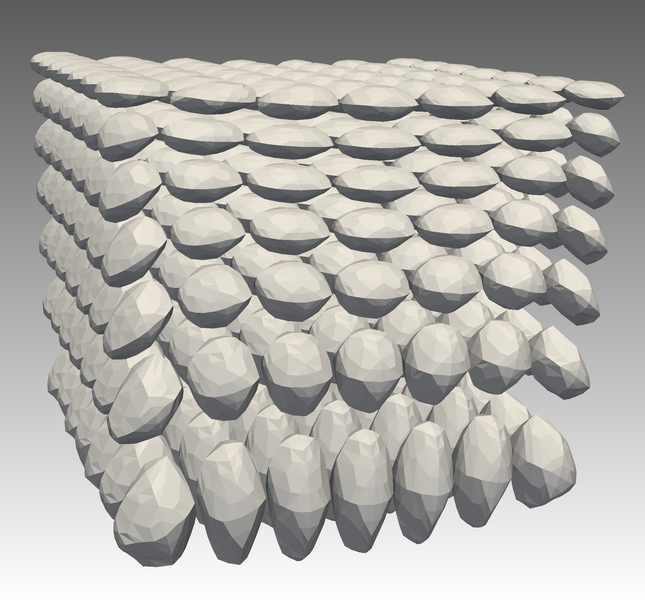}
  \caption{10 gradient steps}
  \label{fig_intermediate1_grid}
\end{subfigure}
\begin{subfigure}{.5\textwidth}
  \centering
  \includegraphics[width=0.9\linewidth]{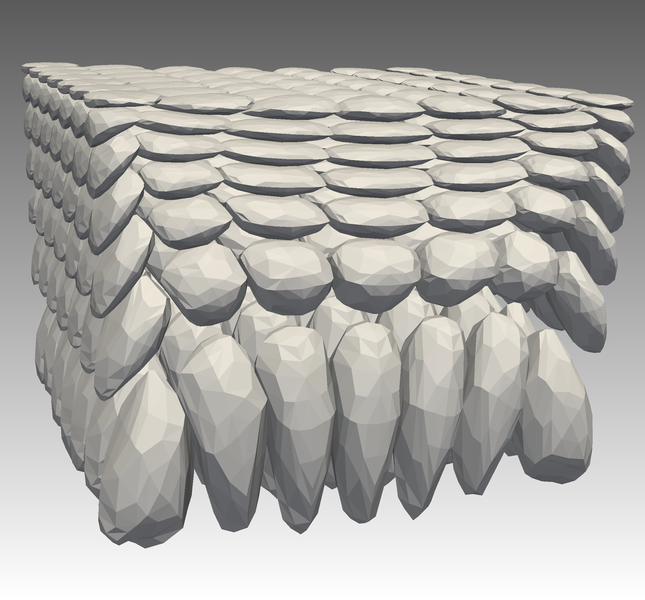}
  \caption{20 steps}
  \label{fig_intermediate2_grid}
\end{subfigure}%
\begin{subfigure}{.5\textwidth}
  \centering
  \includegraphics[width=0.9\linewidth]{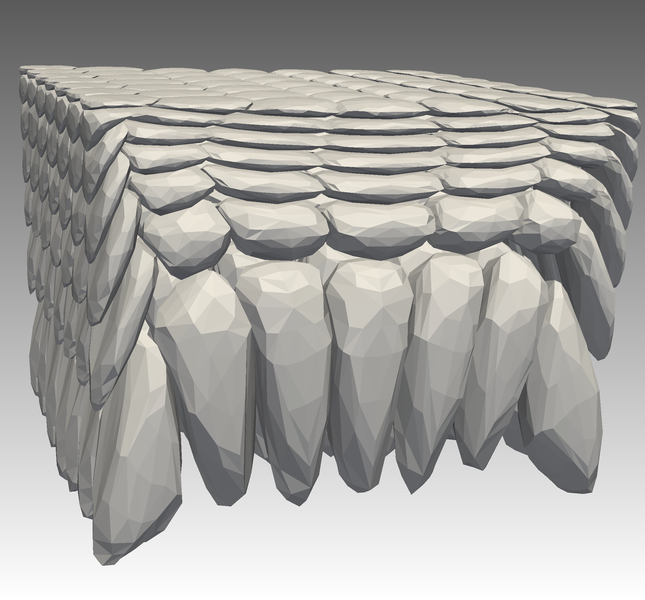}
  \caption{30 steps}
  \label{fig_final_grid}
\end{subfigure}
\caption{Optimization with respect to diffusion data measurements and elastic compliance of a cellular structure}
\label{fig_compliance_parabolic_opt}
\end{figure}

In figure \ref{fig_tube_multigrid}, it can be observed that plain optimization problem tends to overlapping elements.
This would lead to an immediate break down of the PDE solver.
In order to prevent this, the problem is regularized by the perimeter term $j_4$ with $\nu_4=0.01$ in \eqref{objective}.
The effect of the perimeter regularization can be observed throughout the figures.
As soon as sharp edges are smoothed out by the influence of the perimeter regularization, we can switch off this regularization by setting $\nu_4=0$.
In our particular case, we do this after 10 quasi-Newton steps.
The effect of this strategy is visualized in figure \ref{fig_objecive_and_gradient}.
The objective value is depicted on the left hand side, where there is a small step when the regularization is switched off.
Note that this step is a bit delayed, since we choose a limited memory BFGS strategy with a storage of 5 gradients.
The figure on the right hand side shows the $g^S$-norm of the gradient, where we observe a jump after 10 iterations.
The two figures in \ref{fig_objecive_and_gradient} can be seen as an indicator for mesh-independent convergence as expected due to the quasi-Newton method.
We can thus conclude, that for this particular test, a regularization is only required due to the multigrid approach.
Otherwise, if there are no fine-resolved kinks in the initial geometry, one could be successful without regularization.

We should mention that there is a formulation according to \cite[proposition 2.50]{SokoZol}, which is equivalent to (\ref{sd_j4}), for the derivative of the perimeter regularization given by
\begin{equation}
Dj_4(\Omega)[V] =\nu_4\int_{\Omega} \text{div}(V) - \left<\frac{\partial V}{\partial n},n\right> ds.\label{sd_j4_vol}
\end{equation}
This is attractive from a computational point of view, since the evaluation of $\kappa$ in each iteration is a surface-only operation.
Thus, its scalability is affected by the load balancing, which one would have to perform with respect to both volume and surface elements.
However, in our experiments, it seems that for the discretized problem the formulation \eqref{sd_j4} is more successive in eliminating kinks arising due to the multigrid mesh hierarchy.
Figure \ref{fig_tube_curvature} shows the approximation to the mean curvature on the 6 multigrid levels for the initial geometry.
The curvature computation follows the approach presented in \cite{meyer2003discrete}.
An exponential growth of the curvature over the grid levels can be observed, which makes it necessary to adapt $\nu_4$ depending on the finest grid level throughout the refined computations presented in figure \ref{fig_objecive_and_gradient}.

We now concentrate on the \textit{third numerical example}, which is the optimization with respect to the complete system \eqref{le1}-\eqref{diff4} gathering diffusion and elasticity.
The objective, in our particular case, is then formed by the constants $\nu_1 = 0.15$, $\nu_2 = 0.1$, $\nu_3 = 16.0$ and $\nu_4 = 0.01$.
Furthermore, data measurements $\bar{y}$ are chosen to be the simulated values $y$ with the initial geometry.
These are again represented in radial basis functions in order to interpolate them on arbitrary meshes.
The first challenge encountered here is that the coarse grid has to resolve the desired geometry as depicted in figure \ref{fig_compliance_parabolic_opt}.
This limits the coarseness, since each inclusion, that should be simulated, has to be present in the entire grid hierarchy.
In addition to the problem of large kinks in the fine grid, we now have to deal with coarse grids having a large number of finite elements.
Here we want to simulate $7\cdot 7 \cdot 7$ inclusions resulting in a coarse grid consisting of $74\,096$ tetrahedral finite elements.
A coarser grid is not reasonable, since this would have a negative influence on the aspect ratios of the elements, which affects the convergence of iterative solvers.
Moreover, the large number of elements in the coarse grid is a major challenge for solvers based on multigrid.
In order to be efficient, a direct factorization is usually applied on the coarsest level, which is known to be only scalable to some extent.

Figure \ref{fig_compliance_parabolic_opt} shows some snapshots of the optimization on a fine grid with $4.74\cdot 10^6$ tetrahedral elements.
The aim of this particular model is to show how cellular structures can be fortified by tightly stacking cells together close to the surface, where forces are acting.
This, to some extent, reflects growing processes in biological skin structures.
In this particular case, we are not able to apply BFGS update techniques like in the pure parabolic test case.
The quasi-Newton method leads to step sizes which are too large to be feasible as deformations on the finite element grid.
We thus apply only a gradient descent method.
The geometry after 30 gradient steps, which is depicted in figure \ref{fig_final_grid}, is not yet optimal in the sense that it is a stationary point.
Due to increasing aspect ratios of discretization elements, the multigrid solver does not converge anymore.
This illustrates the challenge to choose proper regularization in the case of shape optimization, such that an optimal solution can still be represented in a finite element mesh.

\section{Conclusions}
One of the main focuses of this paper is to describe the interaction of algorithmic building blocks ranging from a multigrid finite element solver to quasi-Newton methods for shape optimization problems.
Here we concentrate on shape spaces and metrics, which are especially suited for large-scale computations.
This results in scalable algorithms for supercomputers. 
Based on complex examples this paper presents the possibilities and challenges of multigrid methods and shape optimization.
In particular, it is shown how to handle the increasing values of approximated curvature in an hierarchical grid structure affecting the regularization.
The purpose of the underlying models within this article is to form building blocks for further studies of biological cell structures.

\section*{Acknowledgment}
This work has been partly supported by the German Research Foundation (DFG) within the priority program SPP 1648 ``Software for Exascale Computing'' under contract number Schu804/12-1, the priority program SPP 1962 ``Non-smooth and Complementarity-based Distributed Parameter Systems: Simulation and Hierarchical Optimization'' under contract number Schu804/15-1 and the research training group 2126 ``Algorithmic Optimization''.

\bibliographystyle{plain}
\bibliography{citations.bib}

\begin{thebibliography}{10}

\bibitem{Absil}
P.A. Absil, R.~Mahony, and R.~Sepulchre.
\newblock {\em {Optimization Algorithms on Matrix Manifolds}}.
\newblock Princeton University Press, 2008.

\bibitem{Berggren}
M.~Berggren.
\newblock A unified discrete-continuous sensitivity analysis method for shape
  optimization.
\newblock In W.~Fitzgibbon et~al., editors, {\em Applied and Numerical Partial
  Differential Equations}, volume~15 of {\em Computational {M}ethods in
  {A}pplied {S}ciences}, pages 25--39. Springer, 2010.

\bibitem{Cea-RAIRO}
J.~C\'{e}a.
\newblock Conception optimale ou identification de formes calcul rapide de la
  d\'{e}riv\'{e}e directionelle de la fonction co\^{u}t.
\newblock {\em RAIRO Modelisation math\'{e}matique et analyse num\'{e}rique},
  20(3):371--402, 1986.

\bibitem{CorreaSeger}
R.~Correa and A.~Seeger.
\newblock {Directional derivative of a minmax function}.
\newblock {\em Nonlinear Analysis: Theory, Methods \& Applications},
  9(1):13--22, 1985.

\bibitem{Delfour-Zolesio-2001}
M.C. Delfour and J.-P. Zol\'esio.
\newblock {\em {Shapes and Geometries: Metrics, Analysis, Differential
  Calculus, and Optimization}}, volume~22 of {\em Advances in Design and
  Control}.
\newblock SIAM, 2nd edition, 2001.

\bibitem{Langer-2015}
P.~Gangl, A.~Laurain, H.~Meftahi, and K.~Sturm.
\newblock Shape optimization of an electric motor subject to nonlinear
  magnetostatics.
\newblock {\em SIAM Journal on Scientific Computing}, 37(6):B1002--B1025, 2015.

\bibitem{GrossReusken}
S.~Gross and A.~Reusken.
\newblock {\em {Numerical Methods for Two-Phase Incompressible Flows}},
  volume~40 of {\em Computational Mathematics}.
\newblock Springer, 2010.

\bibitem{HaslingerMakinen}
J.~Haslinger and {R.A.E.} M\"{a}kinen.
\newblock {\em {Introduction to Shape Optimization: Theory, Approximation, and
  Computation}}, volume~7 of {\em Advances in Design and Control}.
\newblock SIAM, 2003.

\bibitem{ItoKunisch}
K.~Ito and K.~Kunisch.
\newblock {\em {Lagrange Multiplier Approach to Variational Problems and
  Applications}}, volume~15 of {\em Advances in Design and Control}.
\newblock SIAM, 2008.

\bibitem{Ito-Kunisch-Peichl}
K.~Ito, K.~Kunisch, and G.H. Peichl.
\newblock Variational approach to shape derivatives.
\newblock {\em ESAIM: Control, Optimisation and Calculus of Variations},
  14(3):517--539, 2008.

\bibitem{meyer2003discrete}
M.~Meyer, M.~Desbrun, P.~Schr{\"o}der, and A.H. Barr.
\newblock Discrete differential-geometry operators for triangulated
  2-manifolds.
\newblock In H.-C. Hege and K.~Polthier, editors, {\em Visualization and
  Mathematics III}, pages 35--57. Springer, 2003.

\bibitem{MichorMumford2}
P.M. Michor and D.~Mumford.
\newblock Vanishing geodesic distance on spaces of submanifolds and
  diffeomorphisms.
\newblock {\em Documenta Mathematica}, 10:217--245, 2005.

\bibitem{MichorMumford1}
P.M. Michor and D.~Mumford.
\newblock Riemannian geometries on spaces of plane curves.
\newblock {\em Journal of the European Mathematical Society}, 8(1):1--48, 2006.

\bibitem{MichorMumford}
P.M. Michor and D.~Mumford.
\newblock {An overview of the Riemannian metrics on spaces of curves using the
  Hamiltonian approach}.
\newblock {\em Applied and Computational Harmonic Analysis}, 23(1):74--113,
  2007.

\bibitem{Mohammadi-2001}
B.~Mohammadi and O.~Pironneau.
\newblock {\em {Applied Shape Optimization for Fluids}}.
\newblock Oxford University Press, 2001.

\bibitem{muha2011effective}
I.~Muha, A.~Naegel, S.~Stichel, A.~Grillo, M.~Heisig, and G.~Wittum.
\newblock Effective diffusivity in membranes with tetrakaidekahedral cells and
  implications for the permeability of human stratum corneum.
\newblock {\em Journal of Membrane Science}, 368(1):18--25, 2011.

\bibitem{SokoNov}
A.~Novotny and J.~Sokolowski.
\newblock {\em {Topological Derivatives in Shape Optimization}}.
\newblock Springer, 2013.

\bibitem{Berggren2016largescale}
S.~Schmidt, E.~Wadbro, and M.~Berggren.
\newblock Large-scale three-dimensional acoustic horn optimization.
\newblock {\em SIAM Journal on Scientific Computing}, 38(6):B917–B940, 2016.

\bibitem{schulz2015Steklov}
V.H. Schulz, M.~Siebenborn, and K.~Welker.
\newblock {Efficient PDE constrained shape optimization based on
  Steklov-Poincar\'{e} type metrics}.
\newblock {\em SIAM Journal on Optimization (accepted)}, 2015.

\bibitem{schulz2014structure}
V.H. Schulz, M.~Siebenborn, and K.~Welker.
\newblock Structured inverse modeling in parabolic diffusion problems.
\newblock {\em SIAM Journal on Control and Optimization}, 53(6):3319--3338,
  2015.

\bibitem{Sewell}
M.J. Sewell.
\newblock {\em {Maximum and Minimum Principles: A Unified Approach with
  Applications}}.
\newblock Cambridge University Press, 1987.

\bibitem{SokoZol}
J.~Sokolowski and {J.-P.} Zol\'{e}sio.
\newblock {\em {Introduction to Shape Optimization}}, volume~16 of {\em
  Computational Mathematics}.
\newblock Springer, 1992.

\bibitem{Sturm2013}
K.~Sturm.
\newblock {Lagrange method in shape optimization for non-linear partial
  differential equations: A material derivative free approach}.
\newblock Technical Report No.~1817, Weierstra\ss{}-Institut f\"{u}r angewandte
  Analysis und Stochastik, Berlin, 2013.

\bibitem{Sturm}
K.~Sturm.
\newblock Shape differentiability under non-linear {PDE} constraints.
\newblock In A.~Pratelli and G.~Leugering, editors, {\em New Trends in Shape
  Optimization}, volume 166 of {\em International Series of Numerical
  Mathematics}, pages 271--300. Springer, 2015.

\bibitem{thomson1887division}
W.~Thomson.
\newblock On the division of space with minimum partitional area.
\newblock {\em Acta Mathematica}, 11(1-4):121--134, 1887.

\bibitem{Troeltzsch}
F.~Tr\"oltzsch.
\newblock {\em {Optimal Control of Partial Differential Equations: Theory,
  Methods, and Applications}}, volume 112 of {\em Applied Mathematics}.
\newblock American Mathematical Society, 2010.

\bibitem{weaire1994counter}
D.~Weaire and R.~Phelan.
\newblock A counter-example to kelvin's conjecture on minimal surfaces.
\newblock {\em Philosophical Magazine Letters}, 69(2):107--110, 1994.

\bibitem{Welker}
K.~Welker.
\newblock {\em {Efficient PDE Constrained Shape Optimization in Shape Spaces}}.
\newblock PhD thesis, Universit\"{a}t Trier, 2016.

\end{thebibliography}

\end{document}